\newcommand{\GG}{\mathbb{G}}
\newcommand{\OO}{\mathbb{O}}
\newcommand{\XX}{\mathbb{X}}
\newcommand{\GC}{\mathcal{GC}}
\DeclareMathOperator{\gr}{gr}
\DeclareMathOperator{\Rect}{Rect}
\DeclareMathOperator{\Pent}{Pent}
\DeclareMathOperator{\Hex}{Hex}
\DeclareMathOperator{\CF}{CF}
\DeclareMathOperator{\CFK}{CFK}
\title{
    Crossing change maps in filtered grid homology
}
\author{Matthew Kendall}
\date{}
\begin{document}
\maketitle

\begin{abstract}
    We extend the crossing-change maps between grid complexes, defined by Ozsv\'ath--Szab\'o--Stipsicz, to filtered grid complexes and give a combinatorial formulation of the Alishahi--Eftekhary $\mathfrak{l}(K)$ invariant.
\end{abstract}

\section{Introduction}
Grid homology is a combinatorial theory for knots and links first developed by Manolescu, Ozsv\'ath, and Sarkar \cite{manolescu-ozsvath-sarkar}, inspired by the ideas of Sarkar and Wang \cite{sarkar-wang}.
It is isomorphic to knot Floer homology, an invariant for knots in three-manifolds discovered in 2003 by Ozsv\'ath and Szab\'o \cite{OS04} and independently by Rasmussen \cite{rasmussen-phd-thesis}.
Grid homology can be used to prove the Milnor conjecture \cite{milnor-cplx-hypersurfaces}, which states that the unknotting number of the $(p,q)$ torus knot is $\frac{(p-1)(q-1)}{2}$.
This conjecture was first verified by Kronheimer and Mrowka \cite{kron-mrow} in 1993 using smooth four-manifold topology.
Inspired by the proof of Sarkar \cite{sarkar}, Ozsv\'ath, Szab\'o, and Stipsicz in their book \cite{grid-homology} use grid homology to define the $\tau$ invariant and compute $\tau$ for torus knots.
They use a definition of $\tau$ relies on pairs of maps of grid complexes whose underlying knots differ by a crossing change.
The goal of this note is to show that these crossing change maps by Ozsv\'ath--Stipsicz--Szab\'o \cite{grid-homology} can be generalized to filtered grid complexes and used to give a combinatorial formulation of the Alishahi--Eftekhary $\mathfrak{l}(K)$ invariant \cite{AE20}, compare also \cite{rasmussen-phd-thesis}.

To a grid diagram $\GG$ of a knot $K \subset S^3$, the {\it grid complex} $GC^-(\GG)$ is a chain complex over the polynomial ring $\FF[V_1,\ldots,V_n]$, where the field $\FF = \ZZ/2\ZZ$.
It is equipped with a Maslov and Alexander bigrading.
Similarly, the {\it filtered grid complex} $\mathcal{GC}^-(\GG)$ is a chain complex over $\FF[V_1,\ldots,V_n]$ equipped with the same Maslov grading, but whose Alexander function on grid states induces a filtration rather than a grading.
The filtered grid complex is a refinement of the standard grid complex in the sense that $GC^-(\GG)$ is the associated graded object of $\mathcal{GC}^-(\GG)$.
Its key property is that the filtered chain homotopy type of $\mathcal{GC}^-(\GG)$ depends only on the underlying knot $K$.

Let $K_+$ be a knot with a distinguished positive crossing and $K_-$ be the knot with the crossing changed.
Represent $K_+$ and $K_-$ by the grid diagrams $\mathbb{G}_+$ and $\mathbb{G}_-$ that differ by a cross-commutation of columns, see Definition~\ref{def:cross-comm}.
We first state the proposition that we wish to refine, compare Section 6.2 of \cite{grid-homology}.

\begin{proposition}\label{prop:6.1.1}
There exist chain maps
\[
    \text{$C_- : GC^-(\GG_+) \to GC^-(\GG_-)$ and $C_+ : GC^-(\GG_-) \to GC^-(\GG_+)$}
\]
where $C_-$ and $C_+$ is homogeneous of degree $(0,0)$ and $(-2,-1)$, such that $C_- \circ C_+$ and $C_+ \circ C_-$ are filtered chain homotopy to multiplication by $V_1$.
\end{proposition}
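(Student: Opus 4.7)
The plan is to follow the pentagon/hexagon template from Chapter 6 of \cite{grid-homology}. Because $\GG_+$ and $\GG_-$ differ by a cross-commutation, they agree outside two adjacent columns whose bent vertical segments meet at a distinguished intersection point $p^*$ that witnesses the crossing change. Grid states of $\GG_+$ and $\GG_-$ are naturally identified away from $p^*$, so one can speak of empty pentagons $p \in \Pent^\circ(\mathbf{x},\mathbf{y})$ between a state of $\GG_+$ and a state of $\GG_-$: these are embedded pentagonal regions with four corners at components of $\mathbf{x}\cup\mathbf{y}$, one corner at $p^*$, and interior disjoint from $\mathbf{x}$.

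I would define
\[
    C_-(\mathbf{x}) = \sum_{\mathbf{y}} \sum_{p \in \Pent^\circ(\mathbf{x},\mathbf{y})} V_1^{O_1(p)} \cdots V_n^{O_n(p)} \cdot \mathbf{y},
\]
counting right-pointing empty pentagons, and define $C_+$ symmetrically by left-pointing ones. A direct count of corners verifies the bigrading shifts: $C_-$ is bigrading-preserving, while every $C_+$-pentagon contains exactly one $X$-marking, forcing the $(-2,-1)$ shift. To show $\partial^- C_\pm + C_\pm \partial^- = 0$, I would classify all two-dimensional juxtapositions $r \ast p$ and $p \ast r$ of empty rectangles and pentagons; as in \cite{grid-homology}, these pair up so that their contributions cancel, provided one correctly handles the thin degenerate composites along the cross-commuted columns.

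The chain homotopy is built from counts of empty hexagons $h \in \Hex^\circ(\mathbf{x},\mathbf{y})$ incident to $p^*$, weighted by $V_i$-factors at the $O$-markings contained in the interior. The target identity
\[
    \partial^- H_\pm + H_\pm \partial^- + C_\mp \circ C_\pm = V_1 \cdot \mathrm{id}
\]
then follows from analyzing the ends of the moduli of two-dimensional domains, which decompose either as $r \ast h$ or $h \ast r$, contributing to $\partial^- H + H \partial^-$, or as a juxtaposition of two pentagons $p_\mp \ast p_\pm$, contributing to $C_\mp \circ C_\pm$. All such decompositions cancel in pairs over $\FF$ except for a single degenerate thin hexagonal domain whose only interior marking is $O_1$, which produces the summand $V_1 \cdot \mathrm{id}$. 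The pentagon and hexagon corner formulas also show that $H_\pm$ is Alexander-nonincreasing, so the homotopy is genuinely filtered, a property that will be essential for the refinement to $\mathcal{GC}^-$ in the sequel. The main obstacle is the careful bookkeeping of the degenerate boundary strata, particularly the identification of the unique thin hexagon around $O_1$ responsible for the $V_1$ term.
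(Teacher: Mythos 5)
Your proposal takes a genuinely different route from the paper's. The paper proves the filtered Theorem~\ref{thm:filt_6.1.1} first and then deduces Proposition~\ref{prop:6.1.1} as a formal consequence: since $GC^-(\GG)$ is the associated graded object of $\mathcal{GC}^-(\GG)$ (Proposition~\ref{prop:assoc-graded}), the filtered maps $\mathcal{C}_\pm$ induce bigraded maps on the associated graded complexes that automatically inherit the stated degrees and the homotopy to $V_1$. You instead sketch the original direct construction of $C_\pm$ on $GC^-$ by pentagon and hexagon counts, in the spirit of Section 6.2 of \cite{grid-homology}. That route is legitimate, and is in fact the reference the paper cites, but several of your details are wrong in ways that would break the construction if carried out literally.

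First, when $\GG_+$ and $\GG_-$ are drawn on a common torus, the circles $\beta_i$ and $\gamma_i$ meet in \emph{four} points, not at a single distinguished $p^*$. Their complement has four bigons, each containing one marking, and the two relevant corners are $t$ (shared by the two $X$-bigons) and $s$ (shared by the $O_2$-bigon and one $X$-bigon). The map $C_-$ counts empty pentagons with a corner at $s$, and $C_+$ counts those with a corner at $t$; your ``right-pointing'' versus ``left-pointing'' dichotomy is a feature of the commutation move (where $\beta_i \cap \gamma_i$ has only two points), not of cross-commutation, and it does not isolate the correct pentagons here. Second, the claim that ``every $C_+$-pentagon contains exactly one $X$-marking, forcing the $(-2,-1)$ shift'' is backwards: in the bigraded complex $GC^-$ the pentagons, like the rectangles counted by $\p^-_{\XX}$, must be disjoint from $\XX$. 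The $(-2,-1)$ shift of $C_+$ comes from comparing the gradings of $\vec{y}_+$ with those of the nearest-point state $I(\vec{y}_+)$; the $X$-markings that enter this comparison lie in the auxiliary rectangle $r(p)$ associated to the pentagon, not in the pentagon $p$ itself (compare the bookkeeping in the paper's Lemma~\ref{lemma:bigrading}). Correcting these two points, the rest of your outline (cancellation of rectangle--pentagon juxtapositions in pairs, thin annular domains producing the $V_1$ term) is sound.
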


The filtered analog to Proposition~\ref{prop:6.1.1} that we prove is the following:

\begin{theorem}\label{thm:filt_6.1.1}
There exist filtered chain maps
\[
    \mathcal{C}_- : \mathcal{GC}^-(\mathbb{G}_+) \to \mathcal{GC}^-(\mathbb{G}_-) \;\; \text{and} \;\; \mathcal{C}_+ : \mathcal{GC}^-(\mathbb{G}_-) \to \mathcal{GC}^-(\mathbb{G}_+)
\]
where $\mathcal{C}_{-}$ and $\mathcal{C}_+$ are homogeneous of degree $(0,0)$ and $(-2,-1)$ respectively, such that $\mathcal{C}_- \circ \mathcal{C}_+$ and $\mathcal{C}_+ \circ \mathcal{C}_-$ are filtered chain homotopic to multiplication by $V_1$.
\end{theorem}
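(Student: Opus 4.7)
The strategy is to reuse the combinatorial definitions of $C_\pm$ and their chain homotopies from Section~6.2 of \cite{grid-homology} verbatim, now interpreted as maps on $\mathcal{GC}^-(\GG_\pm)$ rather than on $GC^-(\GG_\pm)$. Since $GC^-(\GG)$ is the associated graded of $\mathcal{GC}^-(\GG)$, every chain-level identity already proved in the unfiltered setting continues to hold; what must be added to upgrade Proposition~\ref{prop:6.1.1} to Theorem~\ref{thm:filt_6.1.1} is the verification that each pentagon or hexagon counted in the formulas for $\mathcal{C}_\pm$ and for the homotopies respects the Alexander filtration.

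First I would recall the auxiliary ``mixed'' diagram $\GG_{+-}$ associated to the cross-commutation of Definition~\ref{def:cross-comm}, and define $\mathcal{C}_-$ and $\mathcal{C}_+$ as signed sums of empty pentagons in $\GG_{+-}$ that avoid all but a specified subset of the $X$-markings, weighted by $V_1^{O_1(p)}\cdots V_n^{O_n(p)}$. For each such pentagon $p$ from a grid state $x$ to a grid state $y$ I would verify the filtered inequality
\[
    A(y) - A(x) + \sum_{i} O_i(p) \leq 0,
\]
using the winding-number description of the Alexander function and the explicit shape of pentagons supported near the cross-commutation. This computation records, at the chain level, the fact that the Maslov--Alexander shift is exactly $(0,0)$ for $\mathcal{C}_-$ and $(-2,-1)$ for $\mathcal{C}_+$ on the associated graded, with no further filtration drops hidden inside the pentagons.

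The chain homotopies between $\mathcal{C}_\mp \circ \mathcal{C}_\pm$ and multiplication by $V_1$ are then built from the hexagon counts already used in \cite{grid-homology}, and the filtered case requires the analogous inequality for each hexagon in the sum. This is the main obstacle: unlike the pentagons, hexagons can be long, wrapping around the torus and meeting several $O$- and $X$-markings in the columns undergoing cross-commutation simultaneously, so a careful case analysis by position of the hexagon relative to the distinguished region is necessary to control the Alexander change. Once the pentagon and hexagon inequalities are in hand, the chain-map identity $\partial \circ \mathcal{C}_\pm = \mathcal{C}_\pm \circ \partial$ and the homotopy identity transfer directly from the unfiltered proofs, since the cancellations there occur at the level of juxtaposed domains between grid states rather than merely at the level of their bigradings.
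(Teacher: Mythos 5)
Your overall strategy matches the paper's: define $\mathcal{C}_\pm$ by counting pentagons with a fixed vertex, verify the Maslov/Alexander degree shifts, and produce chain homotopies by counting hexagons. However, there are two substantive gaps.

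First, the claim that ``every chain-level identity already proved in the unfiltered setting continues to hold'' and that ``the chain-map identity $\partial \circ \mathcal{C}_\pm = \mathcal{C}_\pm \circ \partial$ \dots transfer[s] directly from the unfiltered proofs'' is backwards and would not hold up. Passing to the associated graded loses information; an identity holding on $GC^-$ does not imply the corresponding identity on $\mathcal{GC}^-$. More concretely, the filtered differential $\partial^-$ and the filtered pentagon maps count domains that cross $X$-markings, which are simply absent from the unfiltered argument. The paper's Lemma~\ref{lemma:chain-maps} explicitly flags ``some extra cases,'' and the new content is precisely case (P-3) with vertical thin annular domains: these cross an $X$-marking, so they contribute nothing in the unblocked setting, but in the filtered setting they appear and their decomposition into a rectangle and a pentagon is \emph{unique}, not paired. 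The paper resolves this by pairing the annulus $A_1$ supported between $\beta_{i-1}$ and $\beta_i$ with the annulus $A_2$ supported between $\beta_i$ and $\beta_{i+1}$, checking the four combinatorial subcases in Figures~\ref{fig:vert-annulus} and \ref{fig:vert-annulus-decomp}. Your proposal gives no mechanism for these cancellations and, as written, the chain map verification fails.

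Second, your proposed filtration inequality $A(y) - A(x) + \sum_i O_i(p) \le 0$ compares Alexander values of grid states in two different grid diagrams $\GG_+$ and $\GG_-$. That comparison requires a normalization: the paper uses the nearest-point bijection $I:\mathbf{S}(\GG_-)\to\mathbf{S}(\GG_+)$ and introduces correction terms $\Delta_M(\vec y_-) = M(\vec y_-) - M(I(\vec y_-))$ and $\Delta_A(\vec y_-) = A(\vec y_-) - A(I(\vec y_-))$, which are then computed via the $\mathcal J$-function of Lemma~\ref{lemma:maslov} in a four-way case analysis according to the position of $\vec y_- \cap \gamma_i$ in the regions $\mathbf{A},\mathbf{B},\mathbf{C},\mathbf{D}$. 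Your appeal to ``the winding-number description of the Alexander function and the explicit shape of pentagons'' gestures at the same computation but leaves the cross-diagram comparison unaddressed; this step needs to be carried out explicitly, and your remark that hexagons are ``the main obstacle'' gets the difficulty in the wrong place: the hexagon filtration estimate is actually the easier of the two (each hexagon is dominated by a single associated rectangle), while the pentagon bigrading computation and the new filtered chain-map cancellations are where the real work lies.
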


Using the fact that $GC^-(\GG)$ is the associated graded object of $\mathcal{GC}^-(\GG)$, it is not difficult to show that Theorem~\ref{thm:filt_6.1.1} implies Proposition~\ref{prop:6.1.1}, see Section~\ref{sec:proof} for details.

Using the crossing change maps in Theorem~\ref{thm:filt_6.1.1}, we obtain a knot invariant $\mathfrak{l}(K)$ which is a lower bound on the unknotting number, see Theorem~\ref{thm:unknotting}.
Moreover, we show that $\mathfrak{l}(K)$ is equal to the Alishahi--Eftekhary knot invariant see Theorem~\ref{thm:l=lAE}.
The Alishahi--Eftekhary invariant, given in Definition 3.1 of \cite{AE20}, is defined using a generalization of sutured Floer homology \cite{AE-suture-floer}, first developed by Juh\'asz \cite{juhasz}. 
In particular, this paper provides a combinatorial interpretation of their invariant.
In the cases when the filtered chain homotopy type $\mathcal{GC}^-(K)$ of the filtered grid complex is known, computations of $\mathfrak{l}(K)$ are possible, see Section~\ref{sec:invariant} for more details.

The paper is organized as follows.
In Section~\ref{sec:def-constructions}, we review the necessary constructions from grid homology.
In Section~\ref{sec:proof}, we prove Theorem~\ref{thm:filt_6.1.1}.
In Section~\ref{sec:invariant}, we define a knot invariant based on the crossing change maps in Theorem~\ref{thm:filt_6.1.1} and prove that the invariant is a lower bound on the unknotting number.
In Section~\ref{sec:AE}, we review the construction of the Alishahi--Eftekhary knot invariant, show the two invariants are equal.

\subsection*{Acknowledgements}
The author would like to thank Peter Ozsv\'ath for his constant guidance and support throughout the project.
The author would also like to thank Ollie Thakar and Isabella Khan for helpful discussions.

\section{Definitions and constructions}\label{sec:def-constructions}
In this section, we review the necessary definitions and constructions from grid homology.
We follow Ozsv\'ath--Szab\'o--Stipsicz's book \cite{grid-homology}.

\subsection{Grid diagrams}
In this section, we review grid diagrams and an operation relating two grids whose underlying knots differ by a crossing change, called cross-commutation. 

\begin{definition}
    A {\it planar grid diagram} $\GG$ is an $n \times n$ grid on the plane with $n$ of the squares marked with an $X$ and $n$ of the squares marked with an $O$.
    The markings are subject to the following rules:
    \begin{enumerate}
        \item Each row has exactly one square marked with an $X$ and a single square marked with an $O$.
        Each column also has exactly one square marked with an $X$ and exactly one square marked with an $O$.

        \item No square is marked with an $X$ and an $O$.
    \end{enumerate}
    The number $n$ is called the {\it grid number} of $\GG$.
\end{definition}

A grid diagram $\GG$ specifies an oriented link $L$ by the following steps:
\begin{enumerate}
    \item Draw oriented segments from the $X$-marked squares to the $O$-marked squares in each column.

    \item Draw oriented segments connecting the $O$-marked squares to the $X$-marked squares in each row such that the vertical segments always cross above the horizontal segments.
\end{enumerate}

Every oriented link in $S^3$ can be represented by a grid diagram, see Lemma 3.1.3 in \cite{grid-homology} for a proof.
For example, Figure~\ref{fig:right-hand-trefoil-grid-diag} is a grid diagram for the right-handed trefoil.

\begin{figure}
    \centering
    \includegraphics[scale=0.5]{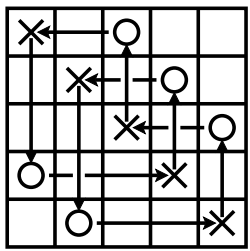}
    \caption{A grid diagram for the right-handed trefoil.}
    \label{fig:right-hand-trefoil-grid-diag}
\end{figure}

The only operation on grid diagrams we will be using is cross-commutation.
\begin{definition}\label{def:cross-comm}
    Fix two consecutive columns (resp. rows) in a grid diagram $\GG$ and let $\GG'$ be obtained by interchanging those two columns (resp. rows).
    Suppose that the interiors of their corresponding intervals intersect nontrivially, but neither is contained in the other.
    Then $\GG,\GG'$ are said to be related by a {\it cross-commutation}.
\end{definition}

See Figure~\ref{fig:cross-comm} for an example of a cross-commutation move.

\begin{figure}
    \centering
    \includegraphics[scale=0.75]{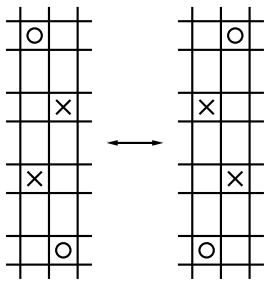}
    \caption{A cross-commutation move.}
    \label{fig:cross-comm}
\end{figure}

Let $K_+$ be a knot with a distinguished positive crossing, and let $K_-$ be the same knot with the distinguished crossing changed.
Then we can represent $K_+$ and $K_-$ with grid diagrams $\GG_+$ and $\GG_-$ differing by a cross-commutation of columns.

Before defining the grid complex, we need to discuss the generators of our complex, called grid states, and their properties.

\subsection{Grid states and connecting grid states}

Consider a toroidal grid diagram $\GG$ for a knot $K$ with grid number $n$.
We can label the horizontal circles $\boldsymbol{\alpha} = \{\alpha_1,\ldots,\alpha_n\}$ and the vertical circles $\boldsymbol{\beta} = \{\beta_1,\ldots,\beta_n\}$.
Define a {\it grid state} to be an $n$-tuple of points $\vec{x} = \{x_1,\ldots,x_n\}$ such that each horizontal circle contains exactly one of the elements in $\vec{x}$ and each vertical circle containe exactly one of the elements in $\vec{x}$.
Let $\mathbf{S}(\GG)$ be the set of grid states for $\GG$.

\begin{definition}\label{def:rectangle}
Fix two grid states $\vec{x},\vec{y} \in \mathbf{S}(\GG)$.
An embedded disk $r$ in the torus whose boundary is the union of four arcs, each of which lies in some $\alpha_j$ or $\beta_j$, is called a {\it rectangle $\vec{x}$ to $\vec{y}$} if it satisfies conditions
\begin{itemize}
    \item At any of the corner points $x$ of $r$, the rectangle contains exactly one of the four quadrants determined by the two intersecting curves at $x$.
    
    \item All of the corner points of $r$ belong to $\vec{x} \cup \vec{y}$.
    
    \item Let $\p_a r$ be the part of the boundary of $r$ belonging to $\alpha_1 \cup \cdots \cup \alpha_n$.
    Then $\p(\p_a r) = \vec{y} - \vec{x}$, where $\vec{y}-\vec{x}$ is thought of as a formal sum of points.
\end{itemize}
Denote the set of rectangles from $\vec{x}$ to $\vec{y}$ by $\Rect(\vec{x},\vec{y})$.
A rectangle is called an {\it empty rectangle} if
\[
    \vec{x} \cap \inter(r) = \vec{y} \cap \inter(r) = \emptyset.
\]
The set of empty rectangles from $\vec{x}$ to $\vec{y}$ is denoted $\Rect^{\circ}(\vec{x},\vec{y})$.
\end{definition}

\subsection{Unblocked grid complex}
For the rest of the paper, we will be working over the field of two elements $\FF = \ZZ/2\ZZ$.

\begin{definition}
Let $\mathbb{G}$ be a toroidal grid diagram with grid number $n$ representing a knot $K$.
The {\it unblocked grid complex} $GC^{-}(\mathbb{G})$ is a free module over $\FF[V_1,\ldots,V_n]$ generated by the grid states $\mathbf{S}(\mathbb{G})$ equipped with a $\FF[V_1,\ldots,V_n]$-module endomorphism $\p_{\XX}^-$ sending $\vec{x} \in \mathbf{S}(\mathbb{G})$ to
\[
    \p^{-}_{\mathbb{X}}(\vec{x}) = \sum_{\vec{y} \in \mathbf{S}(\mathbb{G})} \sum_{\{ r \in \Rect^{\circ}(\vec{x},\vec{y}) \mid r \cap \mathbb{X} = \emptyset \}} V_1^{O_1(r)}\cdots V_n^{O_n(r)} \cdot \vec{y}.
\]
\end{definition}

The endomorphism $\p^{-}_{\mathbb{X}}$ satisfies $(\p^{-}_{\mathbb{X}})^2 = 0$, making $GC^-(\GG)$ into a complex, see Chapter 4 of \cite{grid-homology}.
The unblocked grid complex is bigraded with the Maslov grading and the Alexander grading.
We define these gradings now and give some properties.

\begin{proposition}
The {\it Maslov grading} $M = M_{\mathbb{O}}$ is a unique function on grid states $\mathbf{S}(\mathbb{G})$ uniquely characterized by
\begin{enumerate}
    \item [(M-1)] If $\vec{x}^{NWO}$ is the grade state whose components are the upper left corners of squares marked $O$, then \[M(\vec{x}^{NWO}) = 0.\]
    
    \item [(M-2)] If $\vec{x}$ and $\vec{y}$ can be connected by a rectangle $r \in \Rect(\vec{x},\vec{y})$, then
    \[
        M(\vec{x}) - M(\vec{y}) = 1 - 2\#(r \cap \mathbb{O}) + 2\#(\vec{x} \cap \inter(r)).
    \]
\end{enumerate}
\end{proposition}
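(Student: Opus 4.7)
My plan is to prove existence and uniqueness separately, since the proposition asserts both that a function satisfying (M-1)--(M-2) exists and that it is uniquely characterized.

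For uniqueness, I would first argue that the graph on $\mathbf{S}(\GG)$ whose edges are rectangles is connected: any two grid states differ by a permutation of $\{1,\dots,n\}$ acting on the $x$-coordinates (say), and any permutation is a product of transpositions, each of which can be realized by swapping two points using a rectangle. Hence (M-2) determines every difference $M(\vec{x}) - M(\vec{y})$, provided we first show this difference is path-independent. Path-independence reduces to verifying that for any cyclic sequence $\vec{x} = \vec{x}_0 \to \vec{x}_1 \to \cdots \to \vec{x}_k = \vec{x}_0$ connected by rectangles $r_i \in \Rect(\vec{x}_{i-1}, \vec{x}_i)$, the sum
\[
\sum_{i=1}^{k} \bigl(1 - 2\#(r_i \cap \mathbb{O}) + 2\#(\vec{x}_{i-1} \cap \inter(r_i))\bigr)
\]
vanishes. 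Once path-independence holds, the normalization (M-1) pins down $M$ uniquely.

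For existence, I would construct $M$ explicitly via an auxiliary counting function on the planar lift. Choose a fundamental domain for the torus and lift each state and each $\mathbb{O}$-marker to a finite multiset of points in $\RR^2$ (each state gives $n$ lifts per period; fix a standard choice). For finite multisets $P,Q$ of points in the plane define
\[
    \mathcal{I}(P,Q) = \#\{ (p,q) \in P \times Q \mid p_1 < q_1 \text{ and } p_2 < q_2 \},
\]
and set $\mathcal{J}(P,Q) = \tfrac{1}{2}\bigl(\mathcal{I}(P,Q)+\mathcal{I}(Q,P)\bigr)$. Then define
\[
    M(\vec{x}) \;=\; \mathcal{J}(\vec{x} - \mathbb{O},\, \vec{x} - \mathbb{O}) + 1,
\]
extending $\mathcal{J}$ bilinearly to formal differences of multisets. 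A direct computation shows that on $\vec{x}^{NWO}$ the $\mathcal{J}$-term equals $-1$, giving (M-1).

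For (M-2), I would analyze how $\mathcal{J}(\vec{x} - \mathbb{O}, \vec{x} - \mathbb{O})$ changes under a rectangle move. A rectangle $r$ from $\vec{x}$ to $\vec{y}$ swaps two corner points of $\vec{x}$ for the opposite two corners of $r$, leaving the other $n-2$ points fixed. Expanding the bilinear form, the change decomposes into three contributions: $\mathcal{J}(\vec{x},\vec{x}) \to \mathcal{J}(\vec{y},\vec{y})$, the cross term with $\mathbb{O}$, and the constant $\mathcal{J}(\mathbb{O},\mathbb{O})$ term. Each piece can be expressed geometrically in terms of the number of $\mathbb{O}$-markers inside $r$, the number of unmoved points of $\vec{x}$ inside $r$, and a universal constant coming from the corners of $r$ itself. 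Collecting these yields exactly $1 - 2\#(r \cap \mathbb{O}) + 2\#(\vec{x} \cap \inter(r))$, establishing (M-2). Since (M-2) holds for the explicit $M$, path-independence of the formula in (M-2) is automatic, completing also the uniqueness argument above.

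The main obstacle is the bookkeeping in the verification of (M-2): one must carefully track how $\mathcal{I}(\vec{x},\vec{x})$ changes when two points of $\vec{x}$ are exchanged for two other points at the opposite corners of $r$, and correctly account for the toroidal identifications when $r$ wraps around the edges of the fundamental domain. Once this combinatorial calculation is executed, existence, (M-1), (M-2), and path-independence all follow uniformly.
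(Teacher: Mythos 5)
The paper does not prove this proposition; it simply cites Section~4.3 of Ozsv\'ath--Stipsicz--Szab\'o's book, where existence is established exactly via the explicit formula $M(\vec{x}) = \mathcal{J}(\vec{x} - \mathbb{O}, \vec{x} - \mathbb{O}) + 1$ (reproduced as Lemma~\ref{lemma:maslov} here) and uniqueness via connectedness of the rectangle graph on $\mathbf{S}(\GG)$. Your sketch is essentially the same argument, and it is correct; the two places where you would need to be careful in fleshing it out are (i) the statement ``each state gives $n$ lifts per period'' should be replaced by the cleaner convention of a single fundamental domain $[0,n)\times[0,n)$ containing exactly $n$ points per state, and (ii) uniqueness actually follows directly once any $M$ satisfying (M-1)--(M-2) exists --- if $M,M'$ both satisfy the axioms then $M-M'$ is constant on each component of the rectangle graph and vanishes at $\vec{x}^{NWO}$, so the detour through an a priori path-independence argument, while harmless, is not needed.
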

The existence and uniqueness of the Maslov grading is shown in Section 4.3 of \cite{grid-homology}.
Define $M_{\mathbb{X}}$ analogously, replacing all instances of $O$ with $X$.

There is another characterization of $M$. 
Consider the partial ordering on $\RR^2$, specified by $(p_1,p_2) < (q_1,q_2)$ if $p_1 < q_1$ and $p_2 < q_2$.
For two finite point-sets $P,Q$, let $I(P,Q)$ be the number of pairs $p \in P$ and $q \in Q$ such that $p < q$.
Let $\mathcal{J}(P,Q) = \frac{1}{2}(I(P,Q) + I(Q,P))$.
Extend $\mathcal{J}$ bilinearly over formal sums and differences of subsets of the plane.
The following is \cite[Lemma 4.3.5]{grid-homology}.
\begin{lemma}\label{lemma:maslov}
\[
    M(\vec{x}) = \mathcal{J}(\vec{x} - \mathbb{O}, \vec{x} - \mathbb{O}) + 1.
\]
\end{lemma}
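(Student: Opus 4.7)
The plan is to show that the function $M'(\vec{x}) := \mathcal{J}(\vec{x} - \mathbb{O}, \vec{x} - \mathbb{O}) + 1$ satisfies both (M-1) and (M-2), so that by uniqueness $M' = M$. Before starting, I would fix a fundamental domain for the torus, say $[0,n) \times [0,n) \subset \mathbb{R}^2$, and regard grid states and the set $\mathbb{O}$ as point-sets in this fundamental domain so that the dominance count $I(\cdot,\cdot)$, and hence $\mathcal{J}(\cdot,\cdot)$, makes sense.

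For (M-1), I would compute $\mathcal{J}(\vec{x}^{NWO} - \mathbb{O}, \vec{x}^{NWO} - \mathbb{O})$ directly. Each point of $\vec{x}^{NWO}$ is the upper-left corner of an $O$-marked square, so translating $\vec{x}^{NWO}$ by $(\tfrac{1}{2}, -\tfrac{1}{2})$ produces the centers of the $O$-squares, i.e.\ $\mathbb{O}$. A short combinatorial argument — comparing dominance counts before and after this shift and checking the boundary terms arising from the wrap-around at the fundamental domain — yields $\mathcal{J}(\vec{x}^{NWO} - \mathbb{O}, \vec{x}^{NWO} - \mathbb{O}) = -1$, so $M'(\vec{x}^{NWO}) = 0$.

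For (M-2), the main algebraic step is to expand using bilinearity and the symmetry $\mathcal{J}(A,B) = \mathcal{J}(B,A)$:
\[
    M'(\vec{x}) - M'(\vec{y}) = \mathcal{J}(\vec{x} - \mathbb{O}, \vec{x} - \mathbb{O}) - \mathcal{J}(\vec{y} - \mathbb{O}, \vec{y} - \mathbb{O}) = \mathcal{J}\bigl(\vec{x} - \vec{y}, \vec{x} + \vec{y} - 2\mathbb{O}\bigr).
\]
Given a rectangle $r \in \Rect(\vec{x}, \vec{y})$, the symmetric difference $\vec{x} - \vec{y}$ is supported on the two corners of $r$ in $\vec{x}$ (the southwest and northeast) minus those in $\vec{y}$ (the northwest and southeast). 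I would then evaluate $\mathcal{J}(\vec{x} - \vec{y}, \vec{x} + \vec{y} - 2\mathbb{O})$ by splitting the pair $(\vec{x} + \vec{y} - 2\mathbb{O})$ into the contribution from points of $\vec{x} \cup \vec{y}$ and from $\mathbb{O}$, and identifying each summand with a count of points of $\vec{x}$, $\vec{y}$, or $\mathbb{O}$ lying inside $r$ (or inside the companion rectangle obtained by exchanging the $\alpha$-boundaries).

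The main obstacle will be the careful bookkeeping in the (M-2) computation: one must check that the dominance contributions from pairs straddling the corners of $r$ aggregate exactly to $1 - 2\#(r \cap \mathbb{O}) + 2\#(\vec{x} \cap \inter(r))$, and that all contributions outside $r$ cancel. This cancellation is local because $\vec{x}$ and $\vec{y}$ agree off the corners of $r$; consequently every pair involving a point outside a neighborhood of $r$ contributes equally to the two terms of $\mathcal{J}(A,A) - \mathcal{J}(B,B)$ and cancels. A small extra check is needed to verify that the answer is independent of the chosen fundamental domain, which follows because changing the domain permutes grid states by a uniform translation and thereby alters $\mathcal{J}(\vec{x} - \mathbb{O}, \vec{x} - \mathbb{O})$ by a quantity depending only on $\mathbb{O}$ and the translation, hence constant in $\vec{x}$; (M-1) then pins down this constant.
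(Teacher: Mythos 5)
Your plan---verify that $M'(\vec{x}) := \mathcal{J}(\vec{x}-\mathbb{O},\vec{x}-\mathbb{O}) + 1$ satisfies axioms (M-1) and (M-2) and invoke uniqueness---is exactly how Lemma 4.3.5 is established in \cite{grid-homology}, which this paper cites without reproving. The bilinear identity
\[
\mathcal{J}(\vec{x}-\mathbb{O},\vec{x}-\mathbb{O}) - \mathcal{J}(\vec{y}-\mathbb{O},\vec{y}-\mathbb{O}) = \mathcal{J}(\vec{x}-\vec{y}, \vec{x}+\vec{y}-2\mathbb{O})
\]
is correct (expand both sides and use $\mathcal{J}(A,B)=\mathcal{J}(B,A)$), and the value $-1$ for $\mathcal{J}(\vec{x}^{NWO}-\mathbb{O},\vec{x}^{NWO}-\mathbb{O})$ checks out. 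So the core of the argument is on track.

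Two points you should tighten. First, the final paragraph's justification of domain-independence is not correct as stated: replacing the fundamental domain does \emph{not} translate all grid states by a common vector---it is a cyclic shift that moves only the points in one row (or column) by $(0,\pm n)$ (or $(\pm n,0)$), so the change in $\mathcal{J}(\vec{x},\vec{x})$ taken alone genuinely depends on $\vec{x}$. The correct input is the invariance of $\mathcal{J}(P-Q,P-Q)$ under cyclic shifts whenever $P$ and $Q$ each have exactly one point in every row and column (true here since $\vec{x}$ and $\mathbb{O}$ are both of this type); the $\vec{x}$-dependent contributions from $\mathcal{J}(\vec{x},\vec{x})$ and $-2\mathcal{J}(\vec{x},\mathbb{O})$ cancel. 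Second, for the (M-2) verification you should note explicitly that you may restrict to rectangles contained in the chosen fundamental domain: the uniqueness characterization of $M$ only requires the rectangle relation for a connecting family, and any two grid states are linked by a sequence of such non-wraparound rectangles. Restricting this way removes the troublesome wrap-around cases from the bookkeeping, which otherwise would spoil the clean count $\mathcal{J}(\{x_1,x_2\},\{x_1,x_2\})-\mathcal{J}(\{y_1,y_2\},\{y_1,y_2\})=1$ at the corners of $r$.
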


\begin{definition}\label{def:alexander}
The {\it Alexander function} on grid states is defined by
\[
    A(\vec{x}) = \frac{1}{2}(M_{\mathbb{O}}(\vec{x}) - M_{\mathbb{X}}(\vec{x})) - \left( \frac{n-1}{2}\right).
\]
\end{definition}
The following property characterizes the Alexander grading up to an additive constant, compare Proposition 4.3.3 of \cite{grid-homology}.
\begin{proposition}\label{prop:alexander-grading}
The Alexander function is characterized, up to an additive constant, by the following property.
For any rectangle $r \in \Rect(\vec{x},\vec{y})$,
\[
    A(\vec{x}) - A(\vec{y}) = \#(r \cap \mathbb{X}) - \#(r \cap \mathbb{O}).
\]
\end{proposition}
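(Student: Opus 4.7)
The plan is to reduce the rectangle relation for $A$ to the rectangle relation (M-2) for the two Maslov functions $M_{\mathbb{O}}$ and $M_{\mathbb{X}}$, and then deduce the uniqueness statement from the observation that the graph of grid states connected by rectangles is connected.

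For the existence half, fix a rectangle $r \in \Rect(\vec{x},\vec{y})$. Property (M-2) applied to $M_{\mathbb{O}}$ gives
\[
    M_{\mathbb{O}}(\vec{x}) - M_{\mathbb{O}}(\vec{y}) = 1 - 2\#(r \cap \mathbb{O}) + 2\#(\vec{x} \cap \inter(r)),
\]
and the analogous rule for $M_{\mathbb{X}}$ (which is defined by the same axioms with $\mathbb{O}$ replaced by $\mathbb{X}$) gives
\[
    M_{\mathbb{X}}(\vec{x}) - M_{\mathbb{X}}(\vec{y}) = 1 - 2\#(r \cap \mathbb{X}) + 2\#(\vec{x} \cap \inter(r)).
\]
Subtracting the second equation from the first, the terms $1$ and $2\#(\vec{x}\cap\inter(r))$ cancel, and the additive constant $-(n-1)/2$ in Definition~\ref{def:alexander} cancels upon taking a difference of $A$-values. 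Dividing by two yields
\[
    A(\vec{x}) - A(\vec{y}) = \#(r \cap \mathbb{X}) - \#(r \cap \mathbb{O}),
\]
as claimed.

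For uniqueness up to an additive constant, I would argue that the rectangle relation forces the value of any candidate function to be determined on all grid states once it is fixed on one. Here I view a grid state as a bijection from the horizontal to the vertical circles of $\mathbb{G}$ (equivalently, an element of the symmetric group $S_n$). Any two permutations are related by a sequence of transpositions, and each transposition corresponds to swapping two components of a grid state; such a swap is realized on the torus by an embedded rectangle (possibly after wrapping around, which is allowed since we work on a torus). Consequently, any two grid states $\vec{x}$ and $\vec{y}$ are connected by a finite chain of rectangles, and the prescribed differences along that chain pin down $A(\vec{x}) - A(\vec{y})$. Thus any two functions satisfying the rectangle relation differ by a single constant.

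The only point that requires a moment of care is well-definedness of the prescription along different chains: \emph{a priori} one must check that going from $\vec{x}$ to $\vec{y}$ via two different sequences of rectangles produces the same net value of $\#(r \cap \mathbb{X}) - \#(r \cap \mathbb{O})$. This consistency is immediate from the existence half, since we have exhibited one function, namely the $A$ of Definition~\ref{def:alexander}, that realizes these differences simultaneously for every rectangle. I expect this interplay — using existence to justify consistency, then leveraging connectivity of the grid-state graph for uniqueness — to be the only genuinely structural part of the argument; everything else is a routine computation with (M-2).
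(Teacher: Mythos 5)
Your proof is correct and matches the standard approach: the paper itself defers to Proposition~4.3.3 of \cite{grid-homology}, and your argument --- obtaining the rectangle relation for $A$ by subtracting the two instances of (M-2) for $M_{\mathbb{O}}$ and $M_{\mathbb{X}}$, then deducing uniqueness from connectivity of the grid-state graph under rectangle moves, with consistency along chains supplied by the existence half --- is exactly that proof.
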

Extend the Maslov and Alexander functions to the basis $V_1^{k_1}\cdots V_n^{k_n}$ for arbitrary nonnegative integers $k_1,\ldots,k_n$ by defining $M(V_1^{k_1}\cdots V_n^{k_n} \cdot \vec{x}) = M(\vec{x}) - 2\sum_{i=1}^n k_i$ and $A(V_1^{k_1}\cdots V_n^{k_n} \cdot \vec{x}) = A(\vec{x}) - \sum_{i=1}^n k_i$.

In particular, multiplication by $V_i$ has chain map of degree $(-2,-1)$.

\subsection{Filtered grid complexes}
Let $k$ be a field, which in our case will always be $\ZZ/2\ZZ$.

\begin{definition}\label{def:filtered-cx}
A {\it $\ZZ$-filtered, $\ZZ$-graded chain complex} over $k[V_1,\ldots,V_n]$ is $k$-module $\mathcal{C}$ with
\begin{itemize}
    \item A {\it differential} $\p : \mathcal{C} \to \mathcal{C}$, i.e. a $k[V_1,\ldots,V_n]$-module homomorphism with $\p^2 = 0$ compatible with the {\it $\ZZ$-grading} $\mathcal{C} = \bigoplus_{d \in \ZZ} \mathcal{C}_d$, in the sense that $\p \mathcal{C}_d \subseteq \mathcal{C}_{d-1}$.
    
    \item A {\it $\ZZ$-filtration} $\mathcal{F}_s\mathcal{C}$ for $s \in \ZZ$ that exhausts $\mathcal{C}$, in the sense that $\bigcup_{s \in \ZZ} \mathcal{F}_s\mathcal{C} = \mathcal{C}$.
    The filtration is compatible with the $\ZZ$-grading, in the sense that if $\mathcal{F}_s\mathcal{C}_d = \mathcal{F}_s \mathcal{C} \cap \mathcal{C}_d$, then $\mathcal{F}_s\mathcal{C} = \bigcup_{s \in \ZZ} \mathcal{F}_s\mathcal{C}_d$.
    The filtration is compatible with the differential, in the sense that $\p (\mathcal{F}_s \mathcal{C}) \subseteq \mathcal{F}_s \mathcal{C}$, making $\mathcal{F}_s \mathcal{C}$ into a subcomplex.
    The filtration is bounded below, in the sense that for any integer $d$, there exists an integer $n_d$ such that $\mathcal{F}_s \mathcal{C}_d = 0$ for all $s \le n_d$.
    
    \item There are $k$-module endomorphisms $V_i : \mathcal{C} \to \mathcal{C}$, which satisfy: $V_i,V_j$ commute for all $1 \le i,j \le n$, $\p V_i = V_i \p$, $V_i(\mathcal{C}_d) \subseteq \mathcal{C}_{d-2}$, and $V_i(\mathcal{F}_s\mathcal{C}) \subseteq \mathcal{F}_{s-1}\mathcal{C}$.
\end{itemize}
\end{definition}

\begin{definition}
Let $\mathbb{G}$ be a toroidal grid diagram with grid number $n$ representing a knot $K$.
The \emph{filtered grid complex} $\mathcal{GC}^-(\mathbb{G})$ is generated over $\FF[V_1,\ldots,V_n]$ by $\mathbf{S}(\mathbb{G})$ and has differential
\[
    \p^-\vec{x} = \sum_{\mathbf{y} \in \mathbf{S}(\mathbb{G})} \sum_{r \in \Rect^{\circ}(\vec{x},\vec{y})} V_1^{O_1(r)}\cdots V_n^{O_n(r)} \cdot \vec{y}.
\]
The {\it Maslov grading} is given by $M(\vec{x}) = d$ and the {\it Alexander filtration} is given by $A(\vec{x}) = s$, in the sense that $\mathcal{F}_s\mathcal{C}$ is the $\FF$-span of generators $V_1^{k_1}\cdots V_n^{k_n} \cdot \vec{x}$ which evaluate to most $s$ under the Alexander function.
\end{definition}

Note that the differential now counts rectangles containing $X$-markings.
The filtered module $\mathcal{GC}^-(\GG)$ is shown to be a chain complex in Chapter 13 of \cite{grid-homology}.
The most important property is that the filtered quasi-isomorphism type of $\mathcal{GC}^-(\GG)$ depends only on the underlying knot $K$, see Theorem 13.2.9 of \cite{grid-homology}.

Given a $\ZZ$-graded, $\ZZ$-filtered chain complex $(\mathcal{C},\p)$, the {\it associated graded object} is the chain complex $\gr(\mathcal{C}) = \bigoplus_{d,s \in \ZZ} (\mathcal{F}_s\mathcal{C}_d/\mathcal{F}_{s-1}\mathcal{C}_d)$ with bigrading $\gr(\mathcal{C})_{d,s} = \mathcal{F}_s\mathcal{C}_d/\mathcal{F}_{s-1}\mathcal{C}_d$ and differential $\gr(\p)$ induced by $\p : \mathcal{C} \to \mathcal{C}$, i.e. $\gr(\p) = \sum_{d,s} \gr(\p)_{d,s}$ where $\gr(\p)_{d,s} : \mathcal{F}_s\mathcal{C}_d / \mathcal{F}_{s-1}\mathcal{C}_d \to \mathcal{F}_{s}\mathcal{C}_{d-1} / \mathcal{F}_{s-1}\mathcal{C}_{d-1}$.
The following is Proposition 13.2.6 of \cite{grid-homology}.

\begin{proposition}\label{prop:assoc-graded}
    The associated graded object of $(\mathcal{\GC}^{-}(\GG),\p^-)$ is $(GC^-(\GG),\p^-_{\XX})$.    
\end{proposition}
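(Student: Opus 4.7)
The plan is to identify the two complexes as bigraded $\FF[V_1,\ldots,V_n]$-modules first and then match their differentials using the behavior of the Alexander function under rectangles.

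On the module level, both $\mathcal{GC}^-(\GG)$ and $GC^-(\GG)$ are free over $\FF[V_1,\ldots,V_n]$ on $\mathbf{S}(\GG)$, and the Maslov and Alexander functions are defined on the monomial basis $V_1^{k_1}\cdots V_n^{k_n}\cdot \vec{x}$ by the same formulas. So first I would observe that a monomial generator $V_1^{k_1}\cdots V_n^{k_n}\cdot\vec{x}$ with $M=d$ lies in $\mathcal{F}_s\mathcal{C}_d$ iff its Alexander value is at most $s$, so the quotient $\mathcal{F}_s\mathcal{C}_d/\mathcal{F}_{s-1}\mathcal{C}_d$ is precisely the $\FF$-span of monomial generators in Maslov grading $d$ whose Alexander value equals $s$. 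This is exactly the bigraded summand $GC^-(\GG)_{d,s}$, and summing over $(d,s)$ identifies $\gr(\mathcal{GC}^-(\GG))$ with $GC^-(\GG)$ as a bigraded $\FF[V_1,\ldots,V_n]$-module.

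Next I would match the differentials. For $\vec{x}\in\mathbf{S}(\GG)$ with $A(\vec{x})=s$, each term $V_1^{O_1(r)}\cdots V_n^{O_n(r)}\cdot\vec{y}$ appearing in $\p^-\vec{x}$ has Alexander value
\[
    A(\vec{y}) - \sum_{i=1}^n O_i(r) = A(\vec{y}) - \#(r\cap\OO) = A(\vec{x}) - \#(r\cap\XX),
\]
where the last equality uses Proposition~\ref{prop:alexander-grading}. Consequently, every term sits in $\mathcal{F}_s\mathcal{C}$, confirming the filtered-chain-map condition $\p^-(\mathcal{F}_s\mathcal{C})\subseteq \mathcal{F}_s\mathcal{C}$. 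A term survives in the associated graded $\mathcal{F}_s\mathcal{C}_{d-1}/\mathcal{F}_{s-1}\mathcal{C}_{d-1}$ iff its Alexander value is $s$ rather than strictly less, which happens precisely when $\#(r\cap\XX)=0$. Thus on the associated graded the differential becomes
\[
    \gr(\p^-)(\vec{x}) = \sum_{\vec{y}\in\mathbf{S}(\GG)}\ \sum_{\{r\in\Rect^\circ(\vec{x},\vec{y})\mid r\cap\XX=\emptyset\}} V_1^{O_1(r)}\cdots V_n^{O_n(r)}\cdot\vec{y},
\]
which is the definition of $\p^-_{\XX}$.

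Finally I would remark that the $V_i$-action and Maslov grading on $\gr(\mathcal{GC}^-(\GG))$ coincide with those on $GC^-(\GG)$ because both are defined by the same formula on monomial generators, so the identification is as bigraded $\FF[V_1,\ldots,V_n]$-chain complexes. There is no genuine obstacle here beyond bookkeeping; the only substantive input is Proposition~\ref{prop:alexander-grading}, which translates the $X$-marking count into an Alexander-filtration drop and is precisely what singles out the $X$-avoiding rectangles on the associated graded.
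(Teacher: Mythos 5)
Your proof is correct and takes the standard approach: identify the associated graded as a bigraded module via the monomial basis, then use Proposition~\ref{prop:alexander-grading} to show that a rectangle $r$ drops the Alexander filtration by exactly $\#(r\cap\XX)$, so that only $X$-avoiding rectangles survive in $\gr(\p^-)$. This is essentially the argument given for Proposition 13.2.6 in \cite{grid-homology}, which the paper cites without reproving.
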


\section{Proof of Theorem~\ref{thm:filt_6.1.1}}\label{sec:proof}
First we show that Theorem~\ref{thm:filt_6.1.1} is a generalization of the Ozsv\'ath--Stipsicz--Szab\'o crossing maps in Proposition~\ref{prop:6.1.1}.
Recall that $(GC^{-}(\mathbb{G}),\p^-_{\mathbb{X}})$ is the associated graded object of $(\mathcal{GC}^-(\mathbb{G}),\p^-)$, see Proposition~\ref{prop:assoc-graded}. 
By functoriality we retrieve two maps
\begin{equation}
    c_- : GC^-(\mathbb{G}_+) \to GC^-(\mathbb{G}_-) \;\; \text{and} \;\; 
    c_+ : GC^-(\mathbb{G}_-) \to GC^-(\mathbb{G}_+)
\end{equation}
such that $c_-$ is bigraded and $c_+$ is homogeneous of degree $(-2,-1)$ such that $c_- \circ c_+$ and $c_+ \circ c_-$ are chain-homotopic to multiplication by $V_1$.
Taking homology, we get the statement of Proposition~\ref{prop:6.1.1}.

Now we proceed to the proof of Theorem~\ref{thm:filt_6.1.1}.
We use a similar set up as in Chapter 5 of \cite{grid-homology}.
Draw both grids $\GG_+$ and $\GG_-$ on the same torus, where the $\OO$ and $\XX$ markings are fixed.
Start with $n$ horizontal circles, $n-1$ vertical circles, and two additional circles $\beta_i$ and $\gamma_i$ corresponding to the $i$-th circles of $\GG_+$ and $\GG_-$, such that $\beta_i$ and $\gamma_i$ intersect in four points.
To define our crossing change maps, we need to mark two of these intersection points, which we explain now.
The complement of $\beta_i \cup \gamma_i$ in the grid torus has five components, four of which are bigons, and each bigon contains exactly one $X$ or exactly one $O$.
Since $\GG_+$ and $\GG_-$ differ by a cross-commutation, the two $X$ marked bigons share a vertex on $\beta_i \cap \gamma_i$; call this vertex $t$.
Label the two $O$-markings such that $O_1$ is above $O_2$ in our grid diagram.
The bigon containing $O_2$ and one of the $X$-labeled bigons share a vertex which we call $s$.
These notational choices are shown in Figure~\ref{fig:crossing-change-grid}.

To define the crossing-change maps, we need to count other domains between grid states: pentagons and hexagons.

\begin{figure}
    \centering
    \includegraphics[scale=0.5]{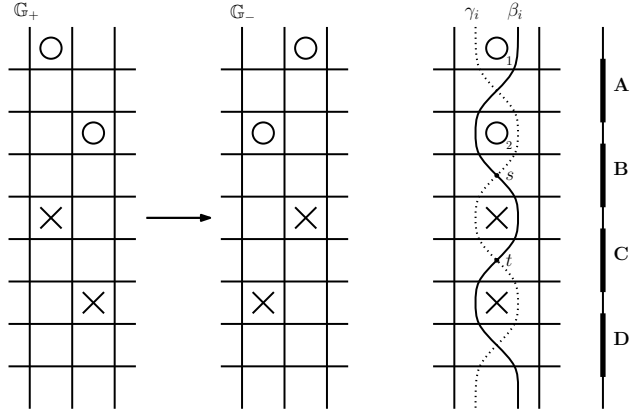}
    \caption{Grid diagram for crossing changes.}
    \label{fig:crossing-change-grid}
\end{figure}

\begin{definition}\label{def:pentagon}
Fix grid states $\vec{x}_- \in \mathbf{S}(\GG_-)$ and $\vec{x}_+ \in \mathbf{S}(\GG_+)$.
An embedded disk $p$ in the torus whose boundary is the union of five arcs, each of which lies in some $\alpha_j,\beta_j$, or $\gamma_i$, is called a {\it pentagon from $\vec{x}_+$ to $\vec{y}_-$} if it satisfies the following conditions
\begin{itemize}
    \item At any of the corner points $x$ of $p$, the pentagon contains exactly one of the four quadrants determined by the two intersecting curves at $x$.
    
    \item Four of the corners of $p$ are in $\vec{x}_+ \cup \vec{y}_-$, the other corner point is chosen from the four intersection points of the curves $\beta_i$ and $\gamma_i$.
    
    \item Let $\p_{\alpha} p$ be the part of the boundary of $p$ belonging to $\alpha_1 \cup \cdots \cup \alpha_n$.
    Then $\p(\p_{\alpha} p) = \vec{y}_- - \vec{x}_+$.
\end{itemize}
Let $\Pent(\vec{x}_+,\vec{y}_-)$ be the set of pentagons from $\vec{x}_+ \in \mathbf{S}(\GG_+)$ to $\vec{y}_- \in \mathbf{S}(\GG_-)$.
A pentagon $p \in \Pent(\vec{x}_+,\vec{y}_-)$ is {\it empty} if $\inter(p) \cap \vec{x}_+ = \inter(p) \cap \vec{y}_- = \emptyset$.
Let $\Pent^{\circ}(\vec{x}_+,\vec{y}_-)$ be the set of empty pentagons from $\vec{x}_+$ to $\vec{y}_-$.
Let $\Pent_s^{\circ}(\vec{x}_+,\vec{y}_-)$ be the set of empty pentagons from $\vec{x}_+$ to $\vec{y}_-$ with one vertex at $s$.
For two grid states $\vec{x}_- \in \mathbf{S}(\GG_-)$ and $\vec{y}_+ \in \mathbf{S}(\GG_+)$, we define $\Pent(\vec{x}_-,\vec{y}_+), \Pent^{\circ}(\vec{x}_-,\vec{y}_+), \Pent^{\circ}_{t}(\vec{x}_-,\vec{y}_+)$ similarly.
\end{definition}

\begin{definition}\label{def:hexagon}
Fix grid states $\vec{x},\vec{y} \in \mathbf{S}(\GG)$.
An embedded disk $h$ in the torus whose boundary is in the union of the $\alpha_j,\beta_j$ (for $j = 1,\ldots,n$) and $\gamma_i$ is called a {\it hexagon from $\vec{x}$ to $\vec{y}$} if it satisfies the following conditions:
\begin{itemize}
    \item At any of the six corner points $x$ of $h$, the hexagon contains exactly one of the four quadrants determined by the two intersecting curves at $x$.
    
    \item Four of the corner points of $h$ are in $\vec{x} \cup \vec{y}$, and the other two corners are $a$ and $b$, where $a,b$ are chosen from the four intersection points of the curves $\beta_i$ and $\gamma_i$.
    
    \item Let $\p_{\alpha} h$ be the part of the boundary of $h$ belonging to $\alpha_1 \cup \cdots \cup \alpha_n$.
    Then $\p(\p_{\alpha} h) = \vec{y} - \vec{x}$.
\end{itemize}
Let $\Hex(\vec{x},\vec{y})$ be the set of hexagons from $\vec{x}$ to $\vec{y}$.
A hexagon $h \in \Hex(\vec{x},\vec{y})$ is {\it empty} if $\inter(h) \cap \vec{x} = \inter(h) \cap \vec{y} = \emptyset$.
Let $\Hex^{\circ}(\vec{x},\vec{y})$ be the set of empty hexagons from $\vec{x}$ to $\vec{y}$.
Let $\Hex_{s,t}^{\circ}(\vec{x},\vec{y})$ denote the set of empty hexagons with two consecutive corners at $s$ and $t$ in the order specified by the orientation of the hexagon.
Let $\Hex_{t,s}^{\circ}(\vec{x},\vec{y})$ be the analogous set with the order of $s$ and $t$ reversed.
\end{definition}

Now we define the crossing change maps.
Fix arbitrary grid states $\vec{x}_+ \in \vec{S}(\GG_+)$ and $\vec{x}_- \in \vec{S}(\GG_-)$.
Recall that $\Pent_s^{\circ}(\vec{x}_+,\vec{x}_-)$ is the set of empty pentagons from $\vec{x}_+$ to $\vec{x}_-$ with one vertex at $s$, and $\Pent_t^{\circ}(\vec{x}_-,\vec{x}_+)$ is the set of empty pentagons from $\vec{x}_-$ to $\vec{x}_+$ with one vertex at $t$.
Define the two $\FF[V_1,\ldots,V_n]$-module maps
\[
    \mathcal{C}_- : \mathcal{GC}^-(\mathbb{G}_+) \to \mathcal{GC}^-(\mathbb{G}_-) \;\; \text{and} \;\; \mathcal{C}_+ : \mathcal{GC}^-(\mathbb{G}_-) \to \mathcal{GC}^-(\mathbb{G}_+)
\]
by counting pentagons either containing the vertex $s$ or the vertex $t$:
\begin{align}
    \mathcal{C}_-(\vec{x}_+) &= \sum_{\vec{y}_- \in \vec{S}(\mathbb{G}_-)} \sum_{p \in \Pent_s^{\circ}(\vec{x}_+,\vec{y}_-)} V_1^{O_1(p)}\cdots V_n^{O_n(p)} \cdot \vec{y}_-, \label{eq:C_-} \\
    \mathcal{C}_+(\vec{x}_-) &= \sum_{\vec{y}_+ \in \vec{S}(\mathbb{G}_+)} \sum_{p \in \Pent_t^{\circ}(\vec{x}_-,\vec{y}_+)} V_1^{O_1(p)}\cdots V_n^{O_n(p)} \cdot \vec{y}_+ \label{eq:C_+}.
\end{align}
Here $O_i(p)$ is $1$ if $p$ contains $O_i$ and $0$ otherwise, where $O_1,\ldots,O_n$ are the $\OO$ markings.

For fixed grid states $\vec{x}_{\pm} \in \GG_+$ and $\vec{y}_{\mp} \in \GG_-$, a {\it domain} $\psi$ from $\vec{x}_{\pm}$ to $\vec{y}_-$ is a formal sum of closures of regions in the complement of the $\alpha_j,\beta_j$, and $\gamma_i$ in the grid torus, taken with integral multiplicities, such that $\p_{\alpha} \psi$, the portion of the boundary in $\alpha_1 \cup \cdots \cup \alpha_n$, satisfies $\p(\p_{\alpha} \psi) = \vec{y}_{\mp} - \vec{x}_{\pm}$.
Let $\pi(\vec{x}_{\pm},\vec{y}_{\mp})$ be the set of domains from $\vec{x}_{\pm}$ to $\vec{y}_{\mp}$.



\begin{lemma}\label{lemma:chain-maps}
$\mathcal{C}_-$ and $\mathcal{C}_+$ are chain maps.
\end{lemma}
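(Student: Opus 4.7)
The plan is to establish that $\partial^- \circ \mathcal{C}_- + \mathcal{C}_- \circ \partial^- = 0$; the case of $\mathcal{C}_+$ will follow by the same argument with the roles of $s,t$ and $\GG_\pm$ reversed. Fix grid states $\vec{x}_+ \in \mathbf{S}(\GG_+)$ and $\vec{y}_- \in \mathbf{S}(\GG_-)$. First I would expand both compositions and identify the coefficient of $\vec{y}_-$ as a weighted count of composite domains $\psi \in \pi(\vec{x}_+,\vec{y}_-)$ that decompose either as $r \ast p$ (an empty rectangle in $\GG_+$ from $\vec{x}_+$ to an intermediate grid state, followed by an empty pentagon in $\Pent_s^{\circ}$ landing at $\vec{y}_-$) or as $p \ast r$ (empty pentagon followed by empty rectangle in $\GG_-$). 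Each contribution comes weighted by $V_1^{O_1(\psi)} \cdots V_n^{O_n(\psi)}$, so it suffices to show that every such $\psi$ admits an even number of decompositions of these two types.

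The bulk of the work is then a combinatorial classification of these composite domains, following the pattern used by Ozsv\'ath--Szab\'o--Stipsicz in the unfiltered setting to prove the chain map property of pentagon maps, but now permitting rectangles and pentagons that contain $X$ markings. Away from degenerate configurations, any juxtaposition $r \ast p$ can be re-sliced along a different internal edge to yield a unique alternative decomposition $p' \ast r'$ with the same underlying domain and the same $O$-multiplicities; these two decompositions contribute identically over $\FF$ and cancel. I would enumerate the possible planar shapes of $r \cup p$ on the torus, distinguishing cases by the relative position of the rectangle to the bigons adjacent to $s$, and in each case exhibit the pairing involution explicitly.

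The main obstacle will be boundary degenerations near the four intersection points of $\beta_i$ and $\gamma_i$, most delicately at the distinguished vertex $s$ itself. When the rectangle and pentagon share a corner lying on $\beta_i \cap \gamma_i$, one of the would-be alternate decompositions can fail to be empty or collapse to a configuration no longer of the required form. A careful local analysis of the neighborhood depicted in Figure~\ref{fig:crossing-change-grid} is required: for each exceptional configuration I would verify either that it violates the emptiness hypothesis, or that the degenerate contributions match up in cancelling pairs, for instance by pairing pentagons that collapse to a rectangle at $s$ with rectangles that extend to a pentagon at $s$. A useful feature of the filtered setting is that no separate sequestering of $X$-containing domains is required, so the bookkeeping is in fact cleaner than in the unfiltered version; once every type of composite domain has been accounted for, the identity $\partial^- \mathcal{C}_- + \mathcal{C}_- \partial^- = 0$ follows.
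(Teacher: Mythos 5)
Your overall strategy is the paper's: expand $\partial^- \mathcal{C}_- + \mathcal{C}_- \partial^-$ as a count $N(\psi)$ of decompositions of composite domains $\psi$ and show $N(\psi)$ is even (mod $2$). The cases where $\vec{x}_+ \setminus (\vec{x}_+ \cap \vec{z}_-)$ has $4$ or $3$ points go exactly as you say: re-slicing along an internal edge or at the unique $270^{\circ}$ corner gives an involution on decompositions of a fixed $\psi$, yielding $N(\psi)=2$.

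The gap is in the annular case, where $\vec{x}_+$ and $\vec{z}_-$ differ in a single component and $\psi$ wraps around the torus. For the \emph{horizontal} thin annulus your involution picture still works ($N(\psi)=2$). But for the \emph{vertical} thin annulus, $\psi$ has a \emph{unique} decomposition into rectangle-then-pentagon (or pentagon-then-rectangle), so $N(\psi)=1$ and there is no involution on decompositions of a single domain to save you. The cancellation mechanism here is genuinely different: you must pair the two \emph{distinct} thin vertical annuli $A_1$ and $A_2$, one lying between $\beta_{i-1}$ and $\beta_i$ and the other between $\beta_i$ and $\beta_{i+1}$, and observe that $A_1$ and $A_2$ cross the same $O$- and $X$-markings, so their two contributions to the sum coincide and cancel over $\FF$. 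Your phrase about ``pairing pentagons that collapse to a rectangle at $s$ with rectangles that extend to a pentagon at $s$'' does not describe this; the relevant pairing is between two domains of the same combinatorial type sitting on opposite sides of $\beta_i$. Relatedly, your remark that the filtered setting is ``cleaner'' because no sequestering of $X$-containing domains is needed is misleading: in the unfiltered version these vertical annuli contain an $X$ and are simply excluded from the count, whereas in the filtered version they \emph{do} contribute and it is precisely this $A_1$/$A_2$ pairing that is the new work required. Without that pairing identified, the argument does not close.
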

\begin{proof}
This proof follows Lemma 5.1.4 in \cite{grid-homology} with some extra cases.

To show that $\mathcal{C}_-$ is a chain map, since we are working over $\FF = \ZZ/2\ZZ$, it is enough to show that $\p^- \circ \mathcal{C}_- + \mathcal{C}_- \circ \p^-$ is identically zero.
This expression can be written as
\begin{equation}\label{eq:chain-maps}
\p^- \circ \mathcal{C}_- + \mathcal{C}_- \circ \p^-
= \sum_{\vec{z}_- \in \mathbf{S}(\GG)} \sum_{\psi \in \pi(\vec{x}_+,\vec{z}_-)} N(\psi) \cdot V_1^{O_1(\psi)} \cdots V_n^{O_n(\psi)} \cdot \vec{z}_-,
\end{equation}
where $N(\psi)$ is the number of ways to decompose $\psi$ as either $r * p$ or $p' * r'$, where $r,r'$ are empty rectangles and $p,p'$ are empty pentagons.
There are three cases of $\psi \in \pi(\vec{x}_+,\vec{z}_-)$:
\begin{itemize}
    \item [(P-1)] $\vec{x}_+ \setminus (\vec{x}_+ \cap \vec{z}_-)$ consists of $4$ points. 
    In this case, there are two decompositions of $\psi$ with the same underlying rectangle and pentagon, only differing in the grid states they connect. 
    See case (P-1) in Figure~\ref{fig:domain-decomp}.
    Thus,
    $N(\psi) = 2$, so there are no contribution of terms in this case. 

    \begin{figure}
    \centering
    \includegraphics[scale=0.5]{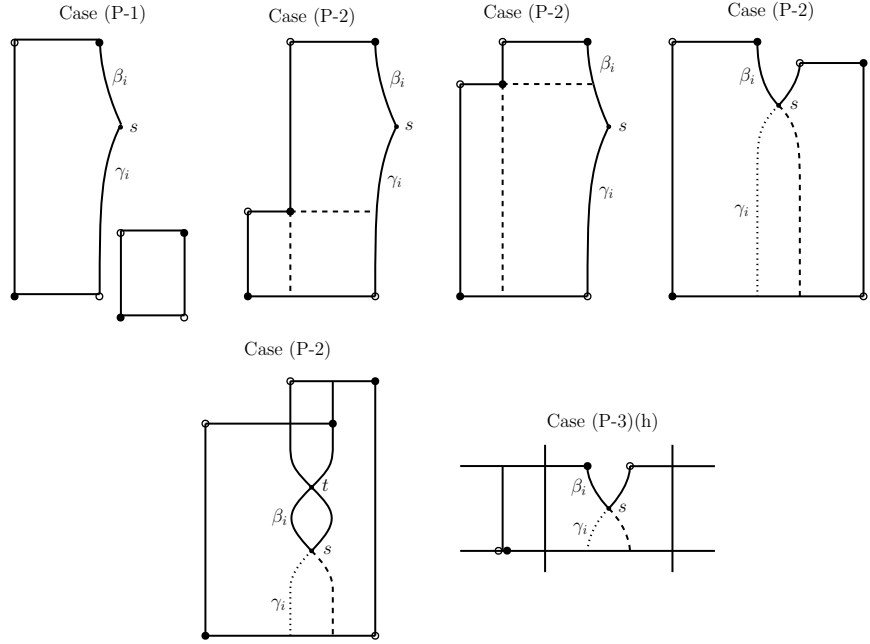}
    \caption{Domain decompositions. The black dots are contained in $\vec{x}_+$ and the white dots are contained in $\vec{z}_-$.}
    \label{fig:domain-decomp}
    \end{figure}
    
    \item [(P-2)] $\vec{x}_+ \setminus (\vec{x}_+ \cap \vec{z}_-)$ consists of $3$ points.
    There are two cases to consider here: either all of the local multiplicites of $\psi$ are $0$ and $1$, or some local multiplicity is $2$.
    In the first case, $\psi$ has seven corners, one of them being a $270^{\circ}$ corner.
    Cutting this corner in two directions gives two different decompositions of $\psi$ as a rectangle and a pentagon.
    In the second case, $\psi$ has a $270^{\circ}$ corner at $a$, and cutting it in two ways gives two decompositions of $\psi$ into a rectangle and a pentagon.
    See case (P-2) in Figure~\ref{fig:domain-decomp}.
    In all cases, $N(\psi)=2$, so there are no contribution of terms in this case.
    
    \item [(P-3)] $\vec{x}_+ \setminus (\vec{x}_+ \cap \vec{z}_-)$ consists of $1$ point.
    In this case, $\vec{z}_-$ is the unique grid state which agrees with $\vec{x}_+$ in all but the component $\beta_i$, and the domain $\psi$ goes around the torus, either horizontally or vertically.
    In the horizontal case, the domain $\psi$ is a horizontal thin annulus minus a small triangle, which can be decomposed in two ways.
    See case (P-3)(h) Figure~\ref{fig:domain-decomp}.
    
    In the vertical case, the decomposition is unique.
    Luckily, we can pair the off domains $\psi$ depending on whether their support lies between $\beta_{i-1}$ and $\beta_i$, or between $\beta_{i}$ and $\beta_{i+1}$.
    
    There are two thin annular regions $A_1$ and $A_2$ which have three corners: one corner at $s$, another corner is at $\vec{x}_+ \cap \beta_i$ and another corner is at $\vec{x}_-$.
    There are four combinatorial types of $A_1$ depending on which bigon $\vec{x}_+ \cap \beta_i$ lies on.
    We show the four decompositions in Figure~\ref{fig:vert-annulus}.

    \begin{figure}
        \centering
        \includegraphics[scale=0.5]{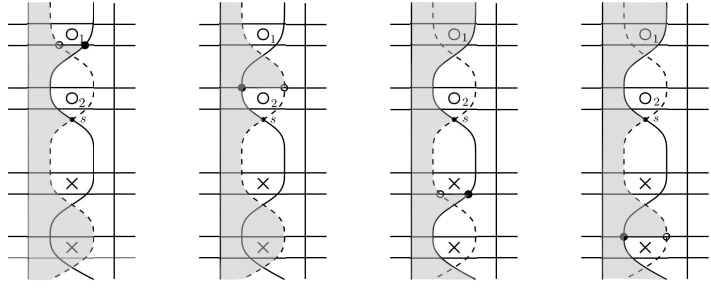}
        \caption{Vertical annulus depending on location of $\vec{x}_+ \cap \beta_i$.}
        \label{fig:vert-annulus}
    \end{figure}
    
    The annulus $A_1$ can be decomposed uniquely as a rectangle and a pentagon in an order determined by the position of $\vec{x}_+ \cap \beta_{i-1}$ relative to $\vec{x}_+ \cap \beta_i$.
    See Figure~\ref{fig:vert-annulus-decomp} for the decomposition of $A_1$ when $\vec{x} \cap \beta_i$ lies on the $O_2$ bigon. 
    Similarly, the $A_2$ annulus can be uniquely decomposed as the sum of a rectangle and pentagon.
    Since the $A_1$ and $A_2$ annuli cross the same $X$ and $O$ markings, their contributions to equation~\eqref{eq:chain-maps} cancel.

    \begin{figure}
        \centering
        \includegraphics[scale=0.5]{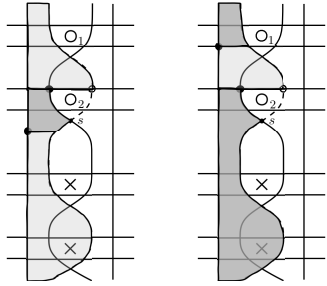}
        \caption{The decomposition of annulus $A_1$ depending on location of $\vec{x}_+ \cap \beta_i$. The darker gray region is the first in the decomposition.}
        \label{fig:vert-annulus-decomp}
    \end{figure}
\end{itemize}

This shows that $\mathcal{C}_-$ is a chain map.
A similar argument can be used to show that $\mathcal{C}_+$ is a chain map.
\end{proof}

Fix two $\ZZ$-filtered, $\ZZ$-graded chain complexes $\mathcal{C}$ and $\mathcal{C}'$ over $\FF[V_1,\ldots,V_n]$.
Call a chain map $f : \mathcal{C} \to \mathcal{C}'$ {\it homogeneous of degree $(m,t)$} if $f(\mathcal{C}_d) \subseteq \mathcal{C}_{d+m}$ and $f(\mathcal{F}_s\mathcal{C}) \subseteq \mathcal{F}_{s+t}\mathcal{C}'$.

\begin{lemma}\label{lemma:bigrading}
$\mathcal{C}_-$ is homogeneous of degree $(0,0)$ and $\mathcal{C}_+$ is homogeneous of degree $(-2,-1)$.
\end{lemma}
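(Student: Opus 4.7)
The plan is to verify the bigrading shifts pentagon-by-pentagon, exploiting $\FF[V_1,\ldots,V_n]$-linearity of $\mathcal{C}_\pm$. For each empty pentagon $p$ appearing in $\mathcal{C}_-(\vec{x}_+)$ (respectively $\mathcal{C}_+(\vec{x}_-)$), the summand $V_1^{O_1(p)}\cdots V_n^{O_n(p)} \cdot \vec{y}$ has Maslov shift $(M(\vec{y})-M(\vec{x})) - 2\#(p \cap \mathbb{O})$ relative to $\vec{x}$ and Alexander shift $(A(\vec{y})-A(\vec{x})) - \#(p \cap \mathbb{O})$, since multiplication by $V_i$ shifts $M$ by $-2$ and $A$ by $-1$. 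Thus the problem reduces to controlling the change in $M$ and $A$ across a single empty pentagon whose fifth corner lies at $s$ or $t$.

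The key step is to establish pentagon analogs of the rectangle shift formulas from property (M-2) and Proposition~\ref{prop:alexander-grading}. I expect that for a pentagon $p$ from $\vec{x}$ to $\vec{y}$ with fifth corner at $v \in \beta_i \cap \gamma_i$,
\[
M(\vec{x}) - M(\vec{y}) = c_M(v) - 2\#(p \cap \mathbb{O}) + 2\#(\vec{x} \cap \inter(p)), \qquad A(\vec{x}) - A(\vec{y}) = c_A(v) + \#(p \cap \mathbb{X}) - \#(p \cap \mathbb{O}),
\]
where $c_M(v), c_A(v)$ depend only on $v$ (and on whether $\vec{x}$ comes from $\GG_+$ or $\GG_-$). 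These formulas follow by extending the $\mathcal{J}$-based characterization of $M$ in Lemma~\ref{lemma:maslov} (and the analogous expression for $A$ derived from Definition~\ref{def:alexander}) to the combined torus carrying both $\beta_i$ and $\gamma_i$: the standard rectangle argument pins down the coefficients of $\#(p \cap \mathbb{O})$, $\#(p \cap \mathbb{X})$, and $\#(\vec{x} \cap \inter(p))$, so the only remaining freedom is a purely local contribution at the corner $v$. I would then pin down $c_M$ and $c_A$ at $v = s$ and $v = t$ by testing the formula on one convenient empty pentagon at each vertex.

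Substituting back and using $\#(\vec{x} \cap \inter(p)) = 0$ for empty pentagons, the summand $V_1^{O_1(p)}\cdots V_n^{O_n(p)} \cdot \vec{y}$ has total Maslov shift $-c_M(v)$ and total Alexander shift $-c_A(v) - \#(p \cap \mathbb{X}) \le -c_A(v)$. The conclusion of the lemma therefore reduces to the numerical identities $c_M(s) = 0$, $c_A(s) = 0$, $c_M(t) = 2$, and $c_A(t) = 1$. The main obstacle is the explicit local computation of these four constants on the combined torus: the asymmetry between $s$ (which borders $O_2$ and one $\mathbb{X}$-bigon) and $t$ (which borders both $\mathbb{X}$-bigons) is precisely what produces the asymmetric degree shifts $(0,0)$ for $\mathcal{C}_-$ and $(-2,-1)$ for $\mathcal{C}_+$, and the inequality for the Alexander shift is saturated only for pentagons disjoint from $\mathbb{X}$, confirming the filtration degrees.
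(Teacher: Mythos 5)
Your proposal takes a genuinely different route from the paper's, and the numerical targets you arrive at ($c_M(s)=c_A(s)=0$, $c_M(t)=2$, $c_A(t)=1$) are the right ones. The paper instead uses the nearest-point map $I : \mathbf{S}(\GG_-) \to \mathbf{S}(\GG_+)$ to associate to each empty pentagon $p \in \Pent^\circ_s(\vec{x}_+,\vec{y}_-)$ a rectangle $r(p) \in \Rect^\circ(\vec{x}_+,\vec{y}_+)$, writes the Maslov and Alexander shifts of $\mathcal{C}_\pm$ in terms of the three discrepancies $\Delta_r = \#(p\cap\OO)-\#(r\cap\OO)$, $\Delta_M = M(\vec{y}_-)-M(\vec{y}_+)$, $\Delta_A = A(\vec{y}_-)-A(\vec{y}_+)$, and evaluates these via the $\mathcal{J}$-formula in a four-case analysis (positions $\mathbf{A},\mathbf{B},\mathbf{C},\mathbf{D}$ of the moving component). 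Your route replaces this with a pentagon analog of (M-2) and of Proposition~\ref{prop:alexander-grading}, with a local correction constant at the fifth corner $v\in\{s,t\}$. Both ultimately hinge on the same $\mathcal{J}$-computations; the paper's rectangle-comparison makes those computations explicit and bite-sized, while your formulation packages the answer more cleanly once the pentagon formula is in hand.

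The gap is precisely where you flagged it. The assertion that for a pentagon with extra corner at $v$ one has $M(\vec{x})-M(\vec{y}) = c_M(v) - 2\#(p\cap\OO) + 2\#(\vec{x}\cap\inter(p))$ and $A(\vec{x})-A(\vec{y}) = c_A(v) + \#(p\cap\XX)-\#(p\cap\OO)$, with $c_M(v), c_A(v)$ depending only on $v$ and the direction, is not a consequence of the rectangle formulas alone; ``the standard rectangle argument pins down the coefficients'' elides the actual work. One needs either to show that any two pentagons with the same extra corner $v$ can be related by pre/post-composition with rectangles (so that the rectangle shift formulas propagate the constant), or to expand $\mathcal{J}(\vec{x}-\OO,\vec{x}-\OO) - \mathcal{J}(\vec{y}-\OO,\vec{y}-\OO)$ directly and isolate the contribution from the corner at $v$---which is in effect the same casework the paper performs. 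Note also that the independence of $c_M(v)$ from which side of $\beta_i$ the pentagon lies on (the ``left'' vs.\ ``right'' pentagons) is not automatic and corresponds to the paper's observation that $\Delta_r(\vec{y}_-)$ is the same for left and right rectangles. So the outline is sound and the conclusions match, but the main content---establishing the pentagon grading formula and pinning down the four constants---remains to be supplied, and doing so is not appreciably shorter than the paper's nearest-point-map argument.
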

\begin{proof}
There is a one-to-one correspondence
\[
    I : \mathbf{S}(\mathbb{G}_-) \to \mathbf{S}(\mathbb{G}_+)
\]
called the {\it nearest-point map} that sends a grid state $\vec{z}_- \in \mathbf{S}(\mathbb{G})$ to the unique grid state $\vec{z}_+ = I(\vec{z}_-)$ that agrees with $\vec{z}_-$ in all but one component (cf. Lemma 5.1.3 of \cite{grid-homology}).

To calculate the grading changes, we associate rectangles to each pentagon via the nearest point map.
Fix a grid state $\vec{x}_+ \in \mathbf{S}(\mathbb{G}_+)$. 
For each grid state $\vec{y}_- \in \mathbf{S}(\mathbb{G}_-)$ such that there exists a pentagon $p \in \Pent_s^{\circ}(\vec{x}_+,\vec{y}_-)$, we consider the associated rectangle $r = r(p) \in \Rect^{\circ}(\vec{x}_+,\vec{y}_+)$.
Define the following constants depending on $\vec{x}_+, \vec{y}_- \in \mathbf{S}(\mathbb{G}_-)$: 
\begin{align*}
    \Delta_r(\vec{x}_+,\vec{y}_-) &= \#(p \cap \mathbb{O}) - \#(r \cap \mathbb{O}), \\
    \Delta_M(\vec{y}_-) &= M(\vec{y}_-) - M(\vec{y}_+), \\
    \Delta_A(\vec{y}_-) &= A(\vec{y}_-) - A(\vec{y}_+).
\end{align*}
First, note that $\Delta_r(\vec{x}_+,\vec{y}_-)$ only depends on the location of $\vec{y}_- \cap \beta_i$: the location of $\vec{x}_+ \cap \beta_i$ does not change the difference $\#(p \cap \OO) - \#(r \cap \OO)$.
So we can drop the dependence on $\vec{x}_+$ and write $\Delta_r(\vec{y}_-) = \Delta_r(\vec{x}_+,\vec{y}_-)$.

For simplicity, let $\Delta_M(\vec{x}_+, \vec{y}_-)$ be the difference $M(V_1^{O_1(p)}\cdots V_n^{O_n(p)} \cdot \vec{y}_-) - M(\vec{x}_+)$, which is the change in Maslov grading.
Similarly define $\Delta_A(\vec{x}_+,\vec{y}_-)$ to be the difference in Alexander grading.
Then we can rewrite the change in Maslov grading in terms of these constants:
\begin{align*}
    \Delta_M(\vec{x}_+, \vec{y}_-) &= 
    M(V_1^{O_1(p)}\cdots V_n^{O_n(p)} \cdot \vec{y}_-) - M(\vec{x}_+) \\
    &= M(\vec{y}_-) - 2\#(p \cap \mathbb{O}) - M(\vec{x}_+) \\
    &= (M(\vec{y}_+) + \Delta_M(\vec{y}_-)) - 2\#(p \cap \mathbb{O}) - M(\vec{x}_+) \\
    &= M(\vec{y}_+) + \Delta_M(\vec{y}_-) - 2(\Delta_r(\vec{y}_-) + \#(r \cap \mathbb{O})) - M(\vec{x}_+) \\
    &= -(1 - 2\#(r \cap \mathbb{O})) + \Delta_M(\vec{y}_-) - 2\Delta_r(\vec{y}_-) - 2\#(r \cap \mathbb{O}) \tag{Property (M-2)} \\
    &= \Delta_M(\vec{y}_-) - 2\Delta_r(\vec{y}_-) - 1. 
\end{align*}
Similarly, we write the change in Alexander grading in terms of these constants:
\begin{align*}
    \Delta_A(\vec{x}_+,\vec{y}_-) &= 
    A(V_1^{O_1(p)}\cdots V_n^{O_n(p)} \cdot \vec{y}_-) - A(\vec{x}_+) \\
    &= A(\vec{y}_-) - \#(p \cap \mathbb{O}) - A(\vec{x}_+) \\
    &= (A(\vec{y}_+) + \Delta_A(\vec{y}_-)) - (\#(r \cap \mathbb{O}) + \Delta_r(\vec{y}_-)) - A(\vec{x}_+) \\
    &= (\#(r \cap \mathbb{O}) - \#(r \cap \mathbb{X})) + \Delta_A(\vec{y}_-) - \#(r \cap \mathbb{O}) - \Delta_r(\vec{y}_-) \\
    &= \Delta_A(\vec{y}_-) - \Delta_r(\vec{y}_-) - \#(r \cap \mathbb{X}). \tag{Proposition~\ref{prop:alexander-grading}}
\end{align*}
Thus, it remains to evaluate $\Delta_r(\vec{y}_-), \Delta_M(\vec{y}_-)$, and $\Delta_A(\vec{y}_-)$ based on the location of $\vec{y}_-$.
To do so, we perform casework based on the location of component $i$ of $\vec{y}_-$. 
We split the $n$ points on the $i$th circle into four special markings $\mathbf{A}, \mathbf{B}, \mathbf{C}, \mathbf{D}$, as shown on the right of Figure~\ref{fig:crossing-change-grid}.
The second, third, and fourth columns of Table~\ref{tab:changes} show the change in $\Delta_r(\vec{y}_-)$.
Note that the change $\Delta_r(\vec{y}_-)$ does not depend on whether we have a right or left rectangle.

To compute the change in the Maslov grading, which is the fourth column of Table~\ref{tab:changes}, we use the equation $\Delta_M(\vec{x}_+, \vec{y}_-) = \Delta_M(\vec{y}_-) - 2\Delta_r(\vec{y}_-) - 1$.
The change in the Alexander grading is the fifth column, which for rows labeled $\mathbf{A},\mathbf{B},\mathbf{D}$ is computed using the upper bound $\Delta_A(\vec{x}_+,\vec{y}_-) \le \Delta_A(\vec{y}_-) - \Delta_r(\vec{y}_-)$ and for row $\mathbf{C}$, we note that $\#(r \cap \mathbb{X}) \ge 1$, so we get a stronger upper bound of $\Delta_A(\vec{x}_+,\vec{y}_-) \le \Delta_A(\vec{y}_-) - \Delta_r(\vec{y}_-) - 1$.
\begin{table}[ht]
    \begin{tabular}{|c|c|c|c|c|c|}
        \hline
         & $\Delta_r(\vec{y}_-)$ & $\Delta_M(\vec{y}_-)$ & $\Delta_A(\vec{y}_-)$ & $\Delta_M(\vec{x}_+,\vec{y}_-)$ & $\Delta_A(\vec{x}_+,\vec{y}_-)$ \\ 
         & $= \#(p \cap \OO) - \#(r \cap \OO)$ & $= M(\vec{y}_-) - M(\vec{y}_+)$ & $= A(\vec{y}_-) - A(\vec{y}_+)$ & & \\
         \hline
         \textbf{A} & $-1$ & $-1$ & $-1$ & $0$ & $\le 0$ \\ \hline
         \textbf{B} & $0$ & $1$ & $0$ & $0$ & $\le 0$ \\ \hline
         \textbf{C} & $0$ & $1$ & $1$ & $0$ & $\le 0$ \\ \hline
         \textbf{D} & $0$ & $1$ & $0$ & $0$ & $\le 0$ \\ \hline
    \end{tabular}
    \caption{Local changes depending on whether $\vec{y}_- \cap \gamma_i$ lies on $\mathbf{A}, \mathbf{B}, \mathbf{C}, \mathbf{D}$.}
    \label{tab:changes}
\end{table}

Now we show the computation for the change $\Delta_M(\vec{y}_-) = M(\vec{y}_-) - M(\vec{y}_+)$.
Let $\mathbb{O}_-,\mathbb{X}_-$ denote the set of $O$'s and $X$'s in $\mathbb{G}_-$, and let $\mathbb{O}_+,\mathbb{X}_+$ be the set of $O$'s and $X$'s in $\mathbb{G}_+$.
\begin{align*}
    \mathcal{J}(\vec{y}_-,\vec{y}_-) - \mathcal{J}(\vec{y}_+,\vec{y}_+) &= 0, \\
    \mathcal{J}(\vec{y}_-, \mathbb{O}_-) - \mathcal{J}(\vec{y}_+, \mathbb{O}_+) &= \begin{cases} 1 & \text{if $\vec{y}_- \cap \gamma_i \in \mathbf{A}$,} \\
    0 & \text{if $\vec{y} \cap \gamma_i \in \mathbf{B} \cup \mathbf{C} \cup \mathbf{D}$,} \end{cases} \\
    \mathcal{J}(\mathbb{O}_-, \mathbb{O}_-) - \mathcal{J}(\mathbb{O}_+, \mathbb{O}_+) &= 1.
\end{align*}
Now using Lemma~\ref{lemma:maslov}, we compute
\begin{align*}
    \Delta_M(\vec{y}_-) = M(\vec{y}_-) - M(\vec{y}_+) 
    &= \mathcal{J}(\vec{y}_- - \mathbb{O}_-, \vec{y}_- - \mathbb{O}_-) - \mathcal{J}(\vec{y}_+ - \mathbb{O}_+, \vec{y}_+ - \mathbb{O}_+) \\
    &= \begin{cases}
        -1 & \text{if $\vec{y}_- \cap \gamma_i \in \mathbf{A}$,} \\
         1 & \text{if $\vec{y} \cap \gamma_i \in \mathbf{B} \cup \mathbf{C} \cup \mathbf{D}$}.
    \end{cases}
\end{align*}
To compute $\Delta_A(\vec{y}_-) = A(\vec{y}_-) - A(\vec{y}_+)$, we compute $M_{\mathbb{X}}(\vec{y}_-) - M_{\mathbb{X}}(\vec{y}_+)$ and use Definition~\ref{def:alexander} of the Alexander function.
Following the same computation as above, we find that
\[
    M_{\XX}(\vec{y}_-) - M_{\XX}(\vec{y}_+)
    = \begin{cases}
    -1 & \text{if $\vec{y}_- \cap \gamma_i \in \mathbf{C}$,} \\
     1 & \text{if $\vec{y} \cap \gamma_i \in \mathbf{A} \cup \mathbf{B} \cup \mathbf{D}$}.
    \end{cases}
\]
Then
\begin{align*}
    \Delta_A(\vec{y}_-) = A(\vec{y}_-) - A(\vec{y}_+)
    &= \frac{1}{2}((M(\vec{y}_-) - M(\vec{y}_+)) - (M_{\XX}(\vec{y}_-) - M_{\XX}(\vec{y}_+)) ) \\
    &= \begin{cases}
    -1 & \text{if $\vec{y}_- \cap \gamma_i \in \mathbf{A}$}, \\
    0 & \text{if $\vec{y}_- \cap \gamma_i \in \mathbf{B} \cup \mathbf{D}$}, \\
    1 & \text{if $\vec{y}_- \cap \gamma_i \in \mathbf{C}$}.
    \end{cases}
\end{align*}

This proves that $\mathcal{C}_-$ is a $\ZZ$-graded, $\ZZ$-filtered chain map.

The argument to show that $\mathcal{C}_+$ is homogeneous of degree $(-2,-1)$ is similar.
In this case, we define the constants
\begin{align*}
    \Delta_r(\vec{y}_+) &= \#(p \cap \mathbb{O}) - \#(r \cap \mathbb{O}), \\
    \Delta_M(\vec{y}_+) &= M(\vec{y}_+) - M(\vec{y}_-), \\
    \Delta_A(\vec{y}_+) &= A(\vec{y}_+) - A(\vec{y}_-),
\end{align*}
and similar computations give that
\begin{align*}
    \Delta_M(\vec{x}_+,\vec{y}_-) &= 
    M(V_1^{O_1(p)} \cdots V_n^{O_n(p)} \cdot \vec{y}_+) - M(\vec{x}_-) = \Delta_M(\vec{y}_+) - 2\Delta_r(\vec{y}_+) - 1, \\
    \Delta_A(\vec{x}_+,\vec{y}_-) &=
    A(V_1^{O_1(p)} \cdots V_n^{O_n(p)} \cdot \vec{y}_+) - A(\vec{x}_-) = \Delta_A(\vec{y}_+) - \Delta_r(\vec{y}_-) - \#(r \cap \XX).
\end{align*}
The table recording each of the above constants depending on the location of $\vec{y}_+ \cap \beta_i$ is shown in Table~\ref{tab:changes-C+}.
For rows {\bf A}, {\bf B}, {\bf D}, we note that $\#(r \cap \mathbb{X}) \ge 1$, so we get a stronger upper bound of $\Delta_A(\vec{x}_-,\vec{y}_+) \le \Delta_A(\vec{y}_+) - \Delta_r(\vec{y}_+) - 1$.
\begin{table}[ht]
    \begin{tabular}{|c|c|c|c|c|c|}
        \hline
         & $\Delta_r(\vec{y}_+)$ & $\Delta_M(\vec{y}_+)$ & $\Delta_A(\vec{y}_+)$ & $\Delta_M(\vec{x}_-,\vec{y}_+)$ & $\Delta_A(\vec{x}_-,\vec{y}_+)$ \\ 
         & $= \#(p \cap \OO) - \#(r \cap \OO)$ & $= M(\vec{y}_+) - M(\vec{y}_-)$ & $= A(\vec{y}_+) - A(\vec{y}_-)$ & & \\
         \hline
         \textbf{A} & $1$ & $1$ & $1$ & $-2$ & $\le -1$ \\ \hline
         \textbf{B} & $0$ & $-1$ & $0$ & $-2$ & $\le -1$ \\ \hline
         \textbf{C} & $0$ & $-1$ & $-1$ & $-2$ & $\le -1$ \\ \hline
         \textbf{D} & $0$ & $-1$ & $0$ & $-2$ & $\le -1$ \\ \hline
    \end{tabular}
    \caption{Local changes depending on whether $\vec{y}_+ \cap \beta_i$ lies on $\mathbf{A}, \mathbf{B}, \mathbf{C}, \mathbf{D}$.}
    \label{tab:changes-C+}
\end{table}
This concludes the proof.
\end{proof}

The remaining part of the proof of Theorem~\ref{thm:filt_6.1.1} closely follows the proof of the Proof of Proposition 6.1.1 and Lemma 5.1.6 in \cite{grid-homology}.

\begin{proof}[Proof of Theorem~\ref{thm:filt_6.1.1}]

By Lemma~\ref{lemma:chain-maps} and Lemma~\ref{lemma:bigrading}, $\mathcal{C}_{-}$ is a $\ZZ$-graded, $\ZZ$-filtered chain map and $\mathcal{C}_{+}$ is a homogeneous chain map of degree $(-2,-1)$.
Now it remains the verify that $\mathcal{C}_- \circ \mathcal{C}_+$ and $\mathcal{C}_+ \circ \mathcal{C}_-$ are filtered chain-homotopic to multiplication by $V_1$.

Define the homotopy operators
\[
    \mathcal{H}_- : \mathcal{GC}^-(\mathbb{G}_-) \to \mathcal{GC}^-(\mathbb{G}_-) \;\; \text{and} \;\; \mathcal{H}_+ : \mathcal{GC}^-(\mathbb{G}_+) \to \mathcal{GC}^-(\mathbb{G}_+)
\]
by counting certain hexagons (see Definition~\ref{def:hexagon}) from a given point:
\begin{align*}
    \mathcal{H}_-(\vec{x}_-) &= \sum_{\vec{y}_- \in \vec{S}(\mathbb{G}_-)} \sum_{h \in \Hex_{s,t}^{\circ}(\vec{x}_-,\vec{y}_-)} V_1^{O_1(h)} \cdots V_n^{O_n(h)} \cdot \vec{y}_-, \\
    \mathcal{H}_+(\vec{x}_+) &= \sum_{\vec{y}_+ \in \vec{S}(\mathbb{G}_+)} \sum_{h \in \Hex_{t,s}^{\circ}(\vec{x}_+,\vec{y}_+)} V_1^{O_1(h)} \cdots V_n^{O_n(h)} \cdot \vec{y}_+.
\end{align*}
We claim that $\mathcal{H}_+$ (resp. $\mathcal{H}_-$) is a chain homotopy operator from $\mathcal{C}_+ \circ \mathcal{C}_-$ (resp. $\mathcal{C}_- \circ \mathcal{C}_+$) to $V_1$.
It is clear that $\mathcal{H}_+$ and $\mathcal{H}_-$ are $\FF[V_1,\ldots,V_n]$-module homomorphisms.

Now we show that $\mathcal{H}_+$ is homogeneous of degree $(-1,0)$, i.e. $\mathcal{H}_+(\mathcal{F}_s\mathcal{C}_d) \subseteq \mathcal{F}_s\mathcal{C}_{d+1}$.
For an empty hexagon $h \in \Hex_{s,t}^{\circ}(\vec{x}_+,\vec{y}_+)$,
there is a corresponding empty rectangle $r$ from $\vec{x}_+$ to $\vec{y}_+$ that contains one more $X$-marking that $h$ and has the same number of $O$-markings as $h$.
Then we can compute
\begin{align*}
    M(V_1^{O_1(h)}\cdots V_1^{O_n(h)} \cdot \vec{y}_+) - M(\vec{x}_+) 
    &= M(\vec{y}_+) - M(\vec{x}_+) - 2\#(h \cap \mathbb{O}) \\
    &= M(\vec{y}_+) - M(\vec{x}_+) - 2\#(r \cap \mathbb{O}) = -1. \tag{Property (M-2)}
\end{align*}
Now we compute the difference in Alexander gradings:
\begin{align*}
    A(V_1^{O_1(h)}\cdots V_1^{O_n(h)} \cdot \vec{y}_+) - A(\vec{x}_+)
    = A(\vec{y}_+) - A(\vec{x}_+) - \#(h \cap \mathbb{O}) = -\#(h \cap \mathbb{X}) \le 0.
\end{align*}
We can similarly show that $\mathcal{H}_-$ is homogeneous of degree $(-1,0)$.

It remains to show the $\mathcal{H}_+$ and $\mathcal{H}_-$ satisfy the homotopy formulas
\begin{align}
    \p^- \circ \mathcal{H}_+ + \mathcal{H}_+ \circ \p^- &= \mathcal{C}_+ \circ \mathcal{C}_- - V_1, \label{eq:homotopy_+} \\
    \p^- \circ \mathcal{H}_- + \mathcal{H}_- \circ \p^- &= \mathcal{C}_- \circ \mathcal{C}_+ - V_1. \label{eq:homotopy_-}
\end{align}
Since we are working over $\FF = \ZZ/2\ZZ$, to show equation~\eqref{eq:homotopy_+}, it will be more convenient to show
\begin{equation}\label{eq:homotopy_+_2}
     \p^- \circ \mathcal{H}_+ + \mathcal{H}_+ \circ \p^- + \mathcal{C}_+ \circ \mathcal{C}_- = V_1.
\end{equation}
The left side of~\eqref{eq:homotopy_+_2} can be expanded as
\[
    (\p^- \circ \mathcal{H}_+ + \mathcal{H}_+ \circ \p^- + \mathcal{C}_+ \circ \mathcal{C}_-)(\vec{x}_+)
    = \sum_{\vec{z}_+ \in \mathbf{S}(\mathbb{G})} \sum_{\psi \in \pi(\vec{x_+},\vec{z}_+)} N(\psi) \cdot V_1^{O_1(\psi)} \cdots V_n^{O_n(\psi)} \cdot \vec{z}_+,
\]
where $N(\psi)$ is the number of ways of decomposing $\psi$ as either:
\begin{itemize}
    \item $\psi = r * h$, where $r$ is an empty rectangle and $h$ is an empty hexagon,
    \item $\psi = h * r$, where $h$ is an empty hexagon and $r$ is an empty rectangle,
    \item $\psi = p * p'$, where $p$ is an empty pentagon from $\GG_+$ to $\GG_-$ and $p'$ is a pentagon from $\GG_-$ to $\GG_+$.
\end{itemize}
There are three cases of $\psi \in \pi(\vec{x}_+,\vec{z}_+)$.
\begin{itemize}
    \item [(H-1)] $\vec{x}_+ \setminus (\vec{x}_+ \cap \vec{z}_+)$ consists of $4$ points.
    The two decompositions of $\psi$ are $r_1 * h_1$ and $h_2 * r_2$, where $r_1$ and $r_2$ are rectangles with the same support and $h_1$ and $h_2$ are hexagons with the same support.
    See case (H-1) of Figure~\ref{fig:hex-domain-decomp}.
    Therefore, $N(\psi) = 2$.

    \item [(H-2)] $\vec{x}_+ \setminus (\vec{x}_+ \cap \vec{z}_+)$ consists of $3$ points.
    In this case, $\psi$ has eight corners. Either seven of the corners are $90^{\circ}$ and one is $270^{\circ}$, or five are $90^{\circ}$ and three are $270^{\circ}$.
    In the first case, cutting at the $270^{\circ}$ corner gives two decompositions of $\psi$, and in the second case, the at one of the corners labeled $s$ or $t$ we can cut in two ways.
    See case (H-2) of Figure~\ref{fig:hex-domain-decomp}.
    In all cases, $N(\psi) = 2$.

    \begin{figure}
    \centering
    \includegraphics[scale=0.5]{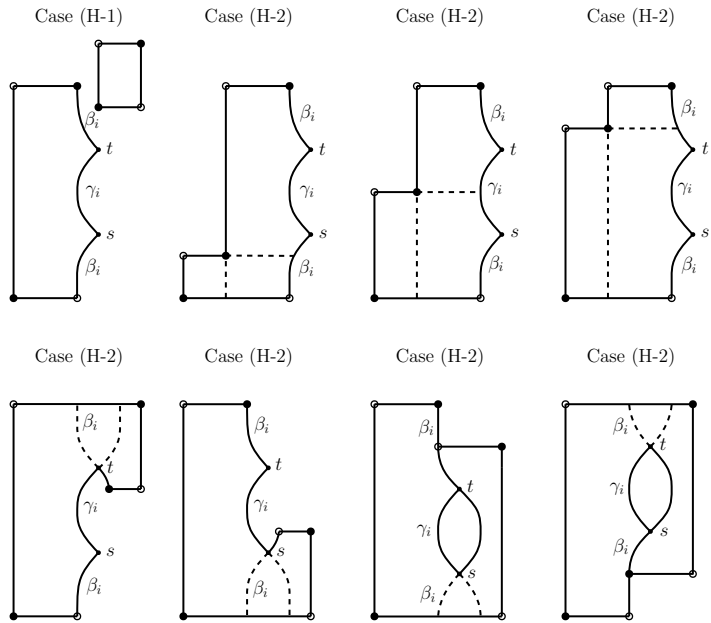}
    \label{fig:hex-domain-decomp}
    \caption{Hexagon domain decompositions. The black dots are contained in $\vec{x}_+$ and the white dots are contained in $\vec{z}_+$.}
    \end{figure}
    
    \item [(H-3)] $\vec{x} = \vec{z}$.
    In this case, $\psi$ is supported inside an annulus between $\beta_i$ and one of the consecutive vertical circles $\beta_{i-1}$ and $\beta_{i+1}$.
    In this case, $N(\psi) = 1$ and decomposes uniquely into one of rectangle-hexagon, hexagon-rectangle, or pentagon-pentagon, depending on the placement of $\vec{x}_+$, see the next paragraph.
    In each case, $\psi$ contains $O_1$, so the left side of~\eqref{eq:homotopy_+_2} contributes $V_1\vec{x}_+$.
    This agrees with the right side of~\eqref{eq:homotopy_+_2}.
    
    To describe the decomposition of $\psi$ more specifically, we perform casework on the placement of $\vec{x}_+ \cap \beta_i$.
    If $\vec{x}_+ \cap \beta_i$ is on the short arc between $s$ and $t$, then the annulus to the east of $\beta_i$ and west of $\beta_{i+1}$ has a unique decomposition.
    If $\vec{x}_+ \cap \beta_i$ is not on the shorter arc connecting $s$ and $t$, then the annulus to the west of $\beta_i$ and to the east of $\beta_{i-1}$ has a unique decomposition (cf. pages 119 and 120 of \cite{grid-homology}).
\end{itemize}

The verification of the homotopy formula for $\mathcal{H}_-$ in~\eqref{eq:homotopy_-} works similarly.
\end{proof}

\section{Crossing-change invariant}\label{sec:invariant}
In this section, we obtain a knot invariant from the crossing change maps in Theorem~\ref{thm:filt_6.1.1} and show that it is a lower bound on the unknotting number.

Let $K$ and $K'$ be two knots, which for our purposes are always in $S^3$.
The {\it Gordian distance} $u(K,K')$ between $K$ and $K'$ is the minimum number of crossing changes required to change $K$ into $K'$.
We can assume there is a planar diagram $K$ such that after switching $m$ crossings, it is isotopic to $K'$.
Label the unknotting sequence by $K=K_0,K_1,\ldots,K_{m-1},K_m=K'$, where $m=u(K,K')$ and each consecutive pair of knots differs by a crossing change.
For every $i=0,\ldots,m$, we can represent $K_i$ by a grid $\mathbb{G}_i$ such that every grid has the same size, and for $i=0,\ldots,m-1$, the grids $\mathbb{G}_i$ and $\mathbb{G}_{i+1}$ differ by a cross-commutation of columns (cf. \cite{grid-homology}, Section 6.2).
By Theorem~\ref{thm:filt_6.1.1}, there exist maps
\[
    \mathfrak{f}_i^+ : \mathcal{GC}^-(\mathbb{G}_i) \to \mathcal{GC}^-(\mathbb{G}_{i+1}) \;\; \text{and} \;\; \mathfrak{f}_i^- : \mathcal{GC}^-(\mathbb{G}_{i+1}) \to \mathcal{GC}^-(\mathbb{G}_{i})
\]
such that $\mathfrak{f}_i^+ \circ \mathfrak{f}_i^-$ and $\mathfrak{f}_i^- \circ \mathfrak{f}_i^+$ are filtered chain-homotopic to multiplication by $V_1$.
So, there exist maps
\[
    \mathfrak{f}^+ : \mathcal{GC}^-(\mathbb{G}_0) \to \mathcal{GC}^-(\mathbb{G}_{m}) \;\; \text{and} \;\; \mathfrak{f}^- : \mathcal{GC}^-(\mathbb{G}_{m}) \to \mathcal{GC}^-(\mathbb{G}_{0})
\]
such that $\mathfrak{f}_i^+ \circ \mathfrak{f}_i^-$ and $\mathfrak{f}^- \circ \mathfrak{f}^+$ are filtered chain-homotopic to multiplication by $V_1^m$.

We make the following definition.
\begin{definition}\label{def:l}
Let $K$ and $K'$ be two knots, and let $\mathbb{G}$ and $\mathbb{G}'$ be grids of the same size representing $K$ and $K'$ respectively. 
Consider all pairs of maps
\[
    \mathfrak{f}^+ : \mathcal{GC}^-(\mathbb{G}) \to \mathcal{GC}^-(\mathbb{G}') \;\; \text{and} \;\; \mathfrak{f}^- : \mathcal{GC}^-(\mathbb{G}') \to \mathcal{GC}^-(\mathbb{G})
\]
such that $\mathfrak{f}^+$ and $\mathfrak{f}^-$ are homogeneous filtered chain maps,
and $\mathfrak{f}^+ \circ \mathfrak{f}^-$ and $\mathfrak{f}^- \circ \mathfrak{f}^+$ are filtered chain-homotopic to multiplication by $V_1^m$ for some positive integer $m$.
Let $\mathfrak{l}(K,K')$ the minimal $m$ over all pairs $\mathfrak{f}^+,\mathfrak{f}^-$ satisfying the above conditions.
Let $\mathfrak{l}(K) = \mathfrak{l}(K,U)$, where $U$ is the unknot.
\end{definition}

By definition, the unknotting number $u(K) = u(K,U)$.
The above discussion implies
\begin{theorem}\label{thm:unknotting}
$\mathfrak{l}(K,K') \le u(K,K')$ and $\mathfrak{l}(K) \le u(K)$.
\end{theorem}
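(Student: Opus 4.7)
The plan is to construct the required pair $(\mathfrak{f}^+, \mathfrak{f}^-)$ by composing the crossing-change maps of Theorem~\ref{thm:filt_6.1.1} along an unknotting sequence, formalizing the sketch given just before Definition~\ref{def:l}. First, I would fix an unknotting sequence $K = K_0, K_1, \ldots, K_m = K'$ realizing $u(K,K') = m$, with consecutive $K_i$ and $K_{i+1}$ differing by a single crossing change, and, following Section 6.2 of \cite{grid-homology}, represent this sequence by grids $\GG_0, \ldots, \GG_m$ of a common grid number such that $\GG_i$ and $\GG_{i+1}$ are related by a cross-commutation of columns (using stabilizations and commutations as needed to equalize grid numbers). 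Applying Theorem~\ref{thm:filt_6.1.1} at each step then produces homogeneous filtered chain maps $\mathfrak{f}_i^{\pm}$ whose compositions $\mathfrak{f}_i^+ \circ \mathfrak{f}_i^-$ and $\mathfrak{f}_i^- \circ \mathfrak{f}_i^+$ are filtered chain-homotopic to multiplication by $V_1$.

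Next, I would set
\[
    \mathfrak{f}^+ = \mathfrak{f}_{m-1}^+ \circ \cdots \circ \mathfrak{f}_0^+ \quad \text{and} \quad \mathfrak{f}^- = \mathfrak{f}_0^- \circ \cdots \circ \mathfrak{f}_{m-1}^-,
\]
both homogeneous filtered chain maps as compositions of such, and prove by induction on $m$ that $\mathfrak{f}^+ \circ \mathfrak{f}^-$ and $\mathfrak{f}^- \circ \mathfrak{f}^+$ are filtered chain-homotopic to $V_1^m$. For the inductive step I would peel off the innermost pair,
\[
    \mathfrak{f}^+ \circ \mathfrak{f}^- = (\mathfrak{f}_{m-1}^+ \circ \cdots \circ \mathfrak{f}_1^+) \circ (\mathfrak{f}_0^+ \circ \mathfrak{f}_0^-) \circ (\mathfrak{f}_1^- \circ \cdots \circ \mathfrak{f}_{m-1}^-),
\]
replace $\mathfrak{f}_0^+ \circ \mathfrak{f}_0^-$ by $V_1$ up to filtered chain homotopy, commute the $V_1$ past the outer composite using $\FF[V_1,\ldots,V_n]$-linearity of all the maps, and apply the inductive hypothesis to the remaining composition. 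The same argument works for $\mathfrak{f}^- \circ \mathfrak{f}^+$. This gives $\mathfrak{l}(K,K') \le m = u(K,K')$, and the second inequality is the specialization $K' = U$.

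The routine point to verify is that filtered chain homotopies compose cleanly under $\FF[V_1,\ldots,V_n]$-linear pre- and post-composition: if $H$ is a filtered chain homotopy between $A$ and $B$ and $g, h$ are filtered chain maps, then $g \circ H \circ h$ is a filtered chain homotopy between $g \circ A \circ h$ and $g \circ B \circ h$, which is immediate since compositions of filtered maps are filtered. The one genuine subtlety I expect to encounter is that the $V_1$ appearing in Theorem~\ref{thm:filt_6.1.1} is attached to a specific $O$-marking associated with each particular cross-commutation, so naively the composition yields a product $V_{i_1} \cdots V_{i_m}$ of possibly different variables. I would resolve this either by arranging the labelings so that a single $O_1$-marking is shared across all the grids, or by appealing to the standard fact that any two $V_i$'s are chain-homotopic endomorphisms of $\mathcal{GC}^-(\GG)$, so that the product is filtered chain-homotopic to $V_1^m$ for any single fixed $V_1$.
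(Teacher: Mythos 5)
Your proof is correct and takes essentially the same route as the paper, which composes the crossing-change maps of Theorem~\ref{thm:filt_6.1.1} along an unknotting sequence realizing $u(K,K')$. The subtlety you flag — that each cross-commutation singles out a possibly different $O$-marking, so the naive composition is filtered chain homotopic to $V_{i_1}\cdots V_{i_m}$ rather than $V_1^m$ — is real and is implicitly elided in the paper's own text; your resolution via the standard fact that multiplications by any two $V_i$, $V_j$ on $\mathcal{GC}^-(\GG)$ are filtered chain homotopic is the right fix.
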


\begin{remark}
The invariant $\mathfrak{l}(K)$ can be computed for knots such that the chain homotopy type of the filtered grid complex is known, such as alternating knots.
Compare Section 14.2 of \cite{grid-homology}.
\end{remark}

\section{Comparison with Alishahi--Eftekhary knot invariant}\label{sec:AE}
Alishahi and Eftekhary define an invariant $\mathfrak{l}_{AE}(K)$,
which is a lower bound on the unknotting number $u(K)$, and an upper bound on the concordance invariant $\nu^+(K)$ and also an upper bound on $\widehat{\mathfrak{t}}(K)$, where $\widehat{\mathfrak{t}}(K)$ is the maximum order of $U$-torsion in knot Floer homology $\mathrm{HFK}^-(K)$, compare Definition 3.1 of \cite{AE20}.
We copy the definition of the Aliashahi--Eftekhary invariant in Definition~\ref{def:lAE}.
For further properties of $\mathfrak{l}(K)$, see Theorem 1.1 and Corollary 1.2 of \cite{AE20}.
The goal of this section is to show the two definitions of $\mathfrak{l}$ coincide: for any two knot $K \subset S^3$, $\mathfrak{l}(K)=\mathfrak{l}_{AE}(K)$.

The definition of $\mathfrak{l}_{AE}(K)$ is in terms of a knot Floer complex $\mathrm{CF}(K)$, which is obtained from a sutured manifold in their construction \cite{AE-suture-floer}, a refinement of Juh\'asz's construction \cite{juhasz}. 
The knot chain complex $\mathrm{CF}(K)$ is a module over $\mathbb{F}[u,w]$ is $\ZZ$--bigraded, with Maslov and Alexander gradings as defined in \cite{OS04}.
The complex $\CF(K)$ is the chain homotopy type the complex $\CF(\mathcal{H})$, where $\mathcal{H}$ is a Heegaard diagram of the sutured manifold associated to the knot. 
The sutured manifold associated to the knot is the complement of a neighborhood of a knot in $S^3$, and it has two sutures on the boundary torus oriented in opposite directions.

If a knot $K' \subset S^3$ is obtained from the knot $K \subset S^3$ by a crossing change, Alishahi--Eftekhary prove, that there exist homogeneous chain maps
\[
    \mathfrak{f}^- : \mathrm{CF}(K) \to \mathrm{CF}(K') \;\; \text{and} \;\;
    \mathfrak{f}^+ : \mathrm{CF}(K') \to \mathrm{CF}(K)
\]
which preserve the Maslov grading, such that $\mathfrak{f}^+ \circ \mathfrak{f}^-$ and $\mathfrak{f}^- \circ \mathfrak{f}^+$ are chain homotopic to multiplication by $w$, see Theorem 2.3 of \cite{AE20}.

We define the Alishahi--Eftekhary knot invariant below.

\begin{definition}\label{def:lAE}
Given two knots $K,K'$, consider all pairs of homogeneous maps
\[
    \mathfrak{f}^- : \mathrm{CF}(K) \to \mathrm{CF}(K') \;\; \text{and} \;\;
    \mathfrak{f}^+ : \mathrm{CF}(K') \to \mathrm{CF}(K)
\]
which are homogeneous, preserve the Maslov grading, and $\mathfrak{f}^+ \circ \mathfrak{f}^-$ and $\mathfrak{f}^- \circ \mathfrak{f}^+$ are chain homotopic to multiplication by $w^m$.
Then $\mathfrak{l}_{AE}(K,K')$ is the minimum nonnegative integer $m$ such that there exist maps $\mathfrak{f}^-,\mathfrak{f}^+$ satisfying the previous conditions.
If $U$ is the unknot, let $\mathfrak{l}_{AE}(K) = \mathfrak{l}_{AE}(K,U)$.
\end{definition}

Our goal in this section is to show our invariant $\mathfrak{l}(K)$ from Definition~\ref{def:l} corresponds to the Alishahi--Eftekhary $\mathfrak{l}$-invariant:

\begin{theorem}~\label{thm:l=lAE}
For any two knots $K,K'$, $\mathfrak{l}(K,K') = \mathfrak{l}_{AE}(K,K')$.
In particular, $\mathfrak{l}(K)=\mathfrak{l}_{AE}(K)$.
\end{theorem}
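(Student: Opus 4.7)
The approach is to identify the two invariants via a filtered chain homotopy equivalence between the underlying chain complexes. The key fact I would invoke is that the filtered grid complex $\mathcal{GC}^-(\GG)$, after reducing modulo $(V_2,\ldots,V_n)$ to obtain a $\ZZ$-filtered, $\ZZ$-graded $\FF[V_1]$-module, is filtered chain homotopy equivalent to the Alishahi--Eftekhary knot Floer complex $\mathrm{CF}(K)$ viewed as an $\FF[w]$-module, with the action of $V_1$ intertwined with that of $w$. This is essentially the content of the identification of grid homology with knot Floer homology of Manolescu--Ozsv\'ath--Sarkar together with the identification of the basepoint variable tracking the relevant $O$-marking.

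Given this identification, the inequality $\mathfrak{l}_{AE}(K,K') \le \mathfrak{l}(K,K')$ is obtained as follows. If $(\mathfrak{f}^+,\mathfrak{f}^-)$ is a pair of filtered chain maps achieving $\mathfrak{l}(K,K')=m$, then reducing modulo $(V_2,\ldots,V_n)$ and conjugating by the equivalence produces homogeneous chain maps $\tilde{\mathfrak{f}}^\pm$ between $\mathrm{CF}(K)$ and $\mathrm{CF}(K')$ whose compositions are chain homotopic to $w^m$ and which preserve the Maslov grading, witnessing $\mathfrak{l}_{AE}(K,K') \le m$. For the reverse inequality $\mathfrak{l}(K,K') \le \mathfrak{l}_{AE}(K,K')$, I would transport maps achieving $\mathfrak{l}_{AE}(K,K')=m$ back through the equivalence to obtain maps between the reduced grid complexes satisfying the $V_1^m$ homotopy, and then lift these maps to $\FF[V_1,\ldots,V_n]$-module maps between the unreduced complexes $\mathcal{GC}^-(\GG)$ and $\mathcal{GC}^-(\GG')$.

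The main obstacle is this lifting step in the proof of $\mathfrak{l}(K,K') \le \mathfrak{l}_{AE}(K,K')$. The cleanest way to handle it is to first show that $\mathfrak{l}(K,K')$ is unchanged if Definition~\ref{def:l} is reformulated using the reduced complex $\mathcal{GC}^-(\GG)/(V_2,\ldots,V_n)$ in place of $\mathcal{GC}^-(\GG)$: since $V_2,\ldots,V_n$ all act homotopically like $V_1$ on the filtered chain complex (the $V_i$ associated with a single knot component induce chain homotopic actions), any pair of maps satisfying the composition homotopy on the reduced complex can be lifted to a pair satisfying the composition homotopy on the unreduced complex, with the same value of $m$. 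With this simplification in hand, the theorem reduces to a direct transport of structure through the filtered chain homotopy equivalence between $\mathcal{GC}^-(\GG)/(V_2,\ldots,V_n)$ and $\mathrm{CF}(K)$, and both inequalities follow simultaneously.
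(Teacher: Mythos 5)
Your proposal takes essentially the same route as the paper: both hinge on (a) showing that the $\mathfrak{l}$-invariant of Definition~\ref{def:l} depends only on the filtered chain homotopy type of the complexes involved, via conjugation of the witnessing maps by filtered chain homotopy equivalences, and (b) invoking the Manolescu--Ozsv\'ath--Sarkar identification of the grid complex with the knot Floer complex (the paper cites Theorem 16.4.1 of \cite{grid-homology} together with Proposition 2.3 of \cite{manolescu-ozsvath-sarkar}). The structural difference is organizational rather than substantive. The paper isolates point (a) as a standalone step (``$\mathfrak{l}(K,K')$ only depends on the filtered chain homotopy type of $\mathcal{GC}^-(\GG)$ and $\mathcal{GC}^-(\GG')$''), proved entirely over $\FF[V_1,\ldots,V_n]$ with the $\varphi,\psi$ being module maps commuting with $V_1^m$, and then transports the whole problem to $\CF(K)$ through the MOS correspondence in one shot. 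You instead prove the two inequalities $\mathfrak{l}_{AE}\le\mathfrak{l}$ and $\mathfrak{l}\le\mathfrak{l}_{AE}$ separately, and explicitly route them through the quotient $\mathcal{GC}^-(\GG)/(V_2,\ldots,V_n)$ as an intermediate $\FF[V_1]$-module. This makes the change of base ring visible, where the paper treats it implicitly by citing the MOS statement about agreement of filtered chain homotopy types.

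The one place where your write-up is looser than it should be is precisely the lifting step you flag. Saying ``the $V_i$ all act chain-homotopically'' justifies that the reduced complex carries the same information, but by itself it does not produce a lift of a pair $(\mathfrak{g}^+,\mathfrak{g}^-)$ on the reduced complexes to a pair on the unreduced $\FF[V_1,\ldots,V_n]$-modules with compositions still homotopic to $V_1^m$. The clean way to close this, which is also implicitly what the paper's organization achieves, is to prove the conjugation-invariance statement \emph{first} and phrase it so that it applies to any filtered complex in the same filtered chain homotopy class as $\mathcal{GC}^-(\GG)$, and then observe that the reduced complex (or $\CF(K)$ itself, after extending/restricting scalars along $V_1\leftrightarrow w$) lies in that class. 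With conjugation invariance established as a lemma, there is no lifting to do: both inequalities follow simultaneously, exactly as you say in your final sentence. So your proposal is correct in outline, but you should promote the conjugation argument to a lemma proved before either inequality, rather than invoking ``conjugating by the equivalence'' inside the inequalities and then patching the lifting afterwards.
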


\begin{proof}
Fix two diagrams $\GG$ and $\GG'$ of size $n$ for $K$ and $K'$ respectively.
First we show that $\mathfrak{l}(K,K')$ only depends on the filtered chain homotopy type of $\mathcal{GC}^-(\GG)$ and $\mathcal{GC}^-(\mathbb{G}')$.
Fix two maps
\[
    \mathfrak{f}^+ : \mathcal{GC}^-(\mathbb{G}) \to \mathcal{GC}^-(\mathbb{G}') \;\; \text{and} \;\; \mathfrak{f}^- : \mathcal{GC}^-(\mathbb{G}') \to \mathcal{GC}^-(\mathbb{G})
\]
such that for $m = \mathfrak{l}(K,K')$, we have $\mathfrak{f}^+ \circ \mathfrak{f}^- \simeq V_1^m$ and $\mathfrak{f}^- \circ \mathfrak{f}^+ \simeq V_1^m$, where $\simeq$ is filtered chain homotopy equivalence.
Let $\mathcal{C}$ be a filtered chain complex in the filtered quasi-isomorphism class of $\mathcal{GC}^-(\mathbb{G'})$. 
Since $\mathcal{GC}^-(\mathbb{G})$ is a chain complex freely generated over a polynomial ring, filtered quasi-isomorphism and filtered chain homotopy are equivalent relations on filtered chain complexes, see for example Appendix A.8 of \cite{grid-homology}.
So there are maps $\varphi : \mathcal{GC}^-(\mathbb{G'}) \to \mathcal{C}$ and $\psi : \mathcal{C} \to \GC^-(\mathbb{G})$ such that $\varphi \circ \psi \simeq \id_{\mathcal{C}}$ and $\psi \circ \varphi \simeq \id_{\GC^-(\GG)}$.
In summary, we have the diagram
\begin{center}
\begin{tikzcd}
\GC^-(\GG) \arrow[bend left]{r}{\mathfrak{f}^+} & \GC^-(\GG') \arrow[bend left]{l}{\mathfrak{f}^-} \arrow[bend left]{r}{\varphi} & \mathcal{C} \arrow[bend left]{l}{\psi}.
\end{tikzcd}
\end{center}
Consider $\mathfrak{g}^+ = \varphi \circ \mathfrak{f}^+$ and $\mathfrak{g}^- = \mathfrak{f}^- \circ \psi$. 
Then $\mathfrak{g}^+ \circ \mathfrak{g}^- = (\varphi \circ \mathfrak{f}^+) \circ (\mathfrak{f}^- \circ \psi) \simeq \varphi \circ V_1^m \circ \psi = V_1^m \circ \varphi \circ \psi \simeq V_1^m$ and $\mathfrak{g}^- \circ \mathfrak{g}^- = (\mathfrak{f}^- \circ \psi) \circ (\varphi \circ \mathfrak{f}^+) \simeq \mathfrak{f}^+ \circ \mathfrak{f}^- \simeq V_1^m$.
Similarly, replacing $\mathcal{GC}^-(\GG)$ with any filtered chain complex in its filtered quasi-isomorphism class does not change $m$.
Therefore, $\mathfrak{l}(K,K')$ only depends on the filtered chain homotopy type of $\mathcal{GC}^-(\GG)$ and $\mathcal{GC}^-(\mathbb{G}')$.

Now we show that the filtered chain homotopy class $\GC^-(K)$ agrees with the chain homotopy class $\CF(K)$.
We have an isomorphism of filtered chain complexes $\GC^-(\GG) \cong \CFK^{-,*}(\mathcal{H})$, where $\mathcal{H}$ is the Heegaard diagram corresponding to $\GG$, and both complexes are over the polynomial ring $\FF[V_1,\ldots,V_n]$, see Theorem 16.4.1 in \cite{grid-homology}.
By Proposition 2.3 of \cite{manolescu-ozsvath-sarkar}, the filtered chain homotopy type of the $2n$-basepoint knot Floer complex $\CFK^{-,*}(\mathcal{H})$ agrees with the filtered chain homotopy type of the standard knot Floer complex $\CFK^-(K)$, a filtered module over $\FF[U]$.
If we instead interpret $\CFK^-(K)$ as a bigraded complex over the two-variable ring $\FF[u,v]$ by letting the second variable be the filtration level, we get the knot Floer complex $\CF(K)$.

By definition $\mathfrak{l}_{AE}(K,K')$ depends only on $\CF(K)$ and $\CF(K')$, and as we showed above, $\mathfrak{l}(K,K')$ depends only on the filtered chain homotopy classes $\GC^-(K)$ and $\GC^-(K')$.
The correspondence of filtered chain homotopy class $\GC^-(\GG)$ and chain homotopy class $\CF(K)$ implies $\mathfrak{l}(K,K')= \mathfrak{l}_{AE}(K,K')$.
\end{proof}

\bibliography{bibliography}{}
\bibliographystyle{alpha}

\end{document}